\date{}
\theoremstyle{plain}
\numberwithin{equation}{section}
\newtheorem{Theorem}{Theorem}[section]
\newtheorem{Lemma}{Lemma}[section]
\theoremstyle{definition}
\newtheorem{Remark}{Remark}[section]
\newcommand{\definition}{{\lower .5ex
\hbox{$\>\>\stackrel{\triangle}{=}\>\>$} }}
\newcommand\diverg{\mathop{\mbox{\rm div}}}
\newcommand\rot{\mathop{\mbox{\rm rot}}}
\begin{document}

\baselineskip=24pt

\leftskip 0 true cm
\rightskip 0 true cm

\newpage
\begin{center}
{\large \bf Global unique solution for the 3-D full compressible MHD equations in space of lower regularity $^*$}\
\
\footnote{$^*$This work was supported by National Natural Science Foundation of China  [grant number 12101345],  Natural Science Foundation of Shandong Province of China [grant number ZR2021QA017].}
\footnote{$^{**}$
Corresponding Author: feichenstudy@163.com (F. Chen)}
\footnote{$^{\dag}$
Email Address:
 wcb1216@163.com (C.B. Wang);\
 feichenstudy@163.com (F. Chen);\
 shuai172021@163.com (S. Wang). }

 Chuanbao Wang, Fei Chen$^{**}$ \ and\  Shuai Wang$^\dag$ \\[2ex]
 School of Mathematics and Statistics,
 Qingdao University,
\\[0.5ex]
 Qingdao, Shandong 266071, P. R. China\\[2ex]
\end{center}

\bigskip

\centerline{\bf Abstract}
In this paper, we establish new $L^p$ gradient estimates of the solutions in order to discuss Cauchy problem for the full compressible magnetohydrodynamic(MHD) systems in $\mathrm{R}^3$.
We use the ``$\diverg-\rm{curl}$" decomposition technique  (see \cite{{HJR},{MR}}) and   new modified  effective viscous flux and  vorticity to calculate ``$\Vert\nabla \mathbf{u}\Vert_{L^3}$" and ``$\Vert\nabla \mathbb{H}\Vert_{L^3}$".
As a result, we obtain  global well-posedness for the solution  with the initial data being in a class of space with lower regularity, while the energy of which  should be suitably small.

\bigskip
\noindent {\bf Key Words:}
Cauchy problem, Full compressible MHD equations, Global existence, Uniqueness.\\
\bigskip
\noindent {\bf MSC(2020)}:
35B65; 35Q35; 76N10.
\bigskip
\leftskip 0 true cm
\rightskip 0 true cm

\section{\large\bf Introduction}

\setcounter{Theorem}{0}
\setcounter{Lemma}{0}

\setcounter{section}{1}

\label{1} \setcounter{Theorem}{0} \setcounter{Lemma}{0}
\setcounter{section}{1}
In this paper, we focus on the following 3-D full compressible magnetohydrodynamic(MHD) equations:
\begin{eqnarray}\label{MHD}
\begin{cases}
\varrho_s +\nabla\cdot(\varrho \mathbf{u})=0,\\
\varrho (\mathbf{u}_s+\mathbf{u}\cdot\nabla \mathbf{u})+\nabla \mathfrak{P}=\iota \Delta \mathbf{u}
+(\iota+\aleph)\nabla(\nabla\cdot \mathbf{u}) +(\rot \mathbb{H}) \times \mathbb{H},\\
\mathbb{C}_v\varrho(\Phi_s +\mathbf{u}\cdot\nabla\Phi)+\mathfrak{P}(\nabla\cdot \mathbf{u})=\kappa \Delta\Phi
+2\iota\vert\mathfrak{D}(\mathbf{u})\vert^2+\aleph  (\nabla\cdot \mathbf{u})^2+\nu\vert\rot \mathbb{H}\vert^2,\\
\mathbb{H}_s-\rot(u \times \mathbb{H})=-\rot(\nu\rot \mathbb{H}), \quad \nabla\cdot\mathbb{H}=0,\\
(\varrho,\mathbf{u},\Phi,\mathbb{H})\rightarrow(1,0,1,0)\quad as\quad \vert x\vert\rightarrow\infty, s>0,\quad(\varrho,\mathbf{u},\Phi,\mathbb{H})\vert_{s=0}=(\varrho_0,\mathbf{u}_0,\Phi_0,\mathbb{H}_0),
\end{cases}
\end{eqnarray}
where $( x,s ) \in \mathrm{R}^3 \times ( 0,+\infty )$.
The  magnetic field,  temperature, density and  velocity of the fluid    are expressed as   $\mathbb{H}=(\mathbb{H}_{1},\mathbb{H}_{2},\mathbb{H}_{3})$, $\Phi$, $\varrho$, $\mathbf{u}=(\mathbf{u}_{1},\mathbf{u}_{2}, \mathbf{u}_{3})$, respectively. $\iota>0,\ 2\iota+3\aleph\ge0,$ ( $\iota$ and $\aleph$ are viscosity coefficients). $\kappa$ and $\nu$ represent heat conductivity  and magnetic diffusion coefficients.
In addition, the adiabatic exponent $\gamma>1$, perfect gas constant $\mathcal{R}>0$, internal energy $e$ and specific heat capacity constant $\mathbb{C}_v>0$,
$$\mathfrak{P}(\varrho,e)=\mathcal{R}\varrho\Phi=(\gamma-1)\varrho e, \quad e=\frac{\mathcal{R}\Phi}{\gamma-1}=\mathbb{C}_v\Phi,$$
and the viscous stress tensor $\mathfrak{D}(\mathbf{u})$ is expressed as
\begin{align}\label{D(u)}
\mathfrak{D}(\mathbf{u})=\frac{(\nabla \mathbf{u})+(\nabla \mathbf{u})'}{2}.
\end{align}

The study of Cauchy problem for the compressible MHD systems is dynamic.  The compressible MHD systems is described by equations of density, velocity, magnetic  and temperature. In the first place, Kawashima \cite{SS} explored the global existence and uniqueness of classical solutions to the Cauchy problem of  multi-dimensional full compressible MHD systems with the initial data being suitably small. Fan and Yu \cite{JWS} obtained the existence of unique local strong solutions  in a bounded domain  $\Omega\subset\mathrm{R}^3$ in case of $\rho_0\geq 0$. Li, Xu and Zhang \cite{HXJG} obtained the global well-posedness about fluid density including vacuum in 3-D isentropic compressible case with small initial energy. On the basis of their study, Liu and Zhong \cite{YXG} expanded to discuss strong solutions of MHD equations with vacuum, when density and  magnetic field satisfy ``$\Vert\varrho_0\Vert_{L^\infty}+\Vert \mathbb{H}_0\Vert_{L^3}$" is suitably small with the large initial energy. Recently, Hou, Jiang and Peng \cite{XMHG} studied the strong solutions under the situations of large oscillations and vacuum with small data``$\Vert\varrho_0\Vert_{L^1}+\Vert \mathbb{H}_0\Vert_{L^2}$". The full compressible Hall magnetohydrodynamic systems was studied by Tao and Zhu in \cite{QCG},  in which they studied the large time behavior of the global solution. Feireisl and Li \cite{EYO} explored non-uniqueness of weak solutions with large data to magnetic field, and they also gained the similar consequence for viscid heat-conductive fluid. Moreover, Fan and Li \cite{JFG} obtain the global strong solution by the constructions of regularity criterion and bootstrap argument for non-isentropic cases without magnetic diffusion in bounded domain.
For the incompressible MHD equations, Bie, Wang and Yao \cite{QQZG} obtained a global and unique solution with variable density in case of the initial data ``$(\varrho_0-1,\mathbf{u}_0,\mathbb{H}_0)$" satisfying certain smallness assumption in Besov Space. Lately, Zhao \cite{XG} established the global well-posedness results with a ion-slip effects small data, and they also get the decay rate for higher order derivatives of solutions. Then, Zhai, Li and Zhao \cite{XZG-2} obtained the global smooth solution of  inviscid Hall MHD equations with small initial data in 3-D period region.

When ignoring the effection of magnetic,  the MHD equations  turns into the Navier-Stokes equations. Hoff \cite{DD} considered the synchronizing global existence to the 2-D and 3-D compressible situations in early time with potentially discontinuous initial data. Huang et al. \cite{{XJG},{XJYS},{XJZS},{XJZG}} engaged a range of Cauchy problems with small energy including vacuum and non-vacuum situations, which included  large oscillations of the energy and initial conditions. With respect to the large time characteristics and decay rate of the solutions with vanishing of density into infinity, Wen and Zhu \cite{HCG} studied the global strong and classical solutions with changing elements of the decay rate in the full compressible situation. Moreover, the initial vacuum  were taken into account in \cite{{HCG},{XJZS},{XJZG},{XJG}}. For inhomogeneous incompressible cases, Abidi, Gui and Zhang \cite{HGPW} obtained the global well-posedness in critical Besov space. Afterward, Zhai and Yin \cite{XZG} constructed the comparative faintish regularity index about the initial velocity, which also expanded to the inhomogeneous incompressible case simultaneously.

Recently, Xu and Zhang \cite{HJR} established the global uniqueness of solution for the full compressible Navier-Stokes systems in a space with lower regularity compared with that of \cite{AP} in which the initial data need to be in $ H^2$.
 It's still an interesting and challenging thing to explore whether we can establish the global uniqueness for solution of full compressible MHD equations(\ref{MHD}) with lower regularity initial conditions compared to $H^2(R^3)$.

Before presenting our conclusion, we  emphasize some important  notations. Define the material derivative, modified effective viscous flux and the vorticity as $\mathscr{L}$, $\mathbb{G}$ and $\mathscr{N}$,
\begin{eqnarray}\label{modified derivatives}
\begin{cases}
\dot{\mathscr{L}} \stackrel{\triangle}{=} \mathscr{L}_s+\mathbf{u}\cdot\nabla \mathscr{L},
\\\mathbb{G} \stackrel{\triangle}{=}(2\iota+\aleph)(\nabla\cdot \mathbf{u})-\mathcal{R}(\varrho\Phi-1)
-(-\Delta)^{-1} \nabla\cdot \Big[(\rot \mathbb{H})\times \mathbb{H} \Big],
\\\mathscr{N} \stackrel{\triangle}{=}\rot \mathbf{u}
-(-\Delta)^{-1}\iota^{-1}\Big[\rot\big((\rot \mathbb{H})\times \mathbb{H}\big)\Big],
\end{cases}
\end{eqnarray}
which are similar to \cite{{DGJ},{XJG}}. But $\mathbb{G}$ and $\mathscr{N}$ are influenced by magnetic field. Besides, one may check that from (\ref{MHD}) and (\ref{modified derivatives}):
\begin{eqnarray}\label{Material derivative transformation}
\Delta\mathbb{G}=\nabla\cdot{(\varrho\dot{\mathbf{u}})},\quad\iota\Delta\mathscr{N}
=\rot(\varrho\dot{\mathbf{u}}),
\end{eqnarray}
and we denote the initial energy as below:
\begin{align}
I_0&\stackrel{\triangle}{=}\int\Big(\frac{1}{2}\big(\varrho_0\vert \mathbf{u}_0\vert^2+\vert \mathbb{H}_0\vert^2\big)
+\mathcal{R}\big(\varrho_0\log\varrho_0+1-\varrho_0\big)+\mathbb{C}_v\varrho_0\big(\Phi_0-\log\Phi_0-1 \big)\Big)dx.\label{initial energy}
\end{align}

 The main results of this paper are shown below:
\begin{Theorem}\label{Main result}
Let $m\in[\frac{9}{2},6)$, suppose that
\begin{eqnarray}\label{(1.17**)}
\begin{cases}
\inf\varrho_0(x)>0,\quad\varrho_0-1\in H^1\cap W^{1,m},\\
\inf\Phi_0(x)>0,\quad\Phi_0-1\in L^{\infty}\cap H^1,\\
(\mathbf{u}_0,\mathbb{H}_0)\in H^1\cap W^{1,3},\\
\end{cases}
\end{eqnarray}
$\exists$ a constant $\zeta>0$ relying only on $\iota$, $\aleph$, $\kappa$, $\mathcal{R}$, $\gamma$, $\nu$, $\inf\varrho_0$, $\sup\varrho_0$, $\inf\Phi_0$, $\sup\Phi_0$, $\Vert\nabla \mathbf{u}_0\Vert_{L^2}$ and $\Vert\nabla \mathbb{H}_0\Vert_{L^2}$, for any $S\in(0,\infty)$, such that if
\begin{eqnarray}\label{(1.18**)}
I_0\le\zeta,
\end{eqnarray}
the problem $(\ref{MHD})$ has a unique global solution $(\varrho,\mathbf{u},\Phi, \mathbb{H})$ satisfying
\begin{eqnarray}\label{(1.19**)}
\begin{cases}
\varrho-1\in C([0,S];H^1\cap W^{1,m}),\quad\inf\varrho_(x,s)>0,\\
\Phi-1\in L^{\infty}(0,S;H^1)\cap L^2(0,S;H^2)\cap L^n(0,S;L^{\infty}),\quad\inf\Phi(x,s)>0,\\
(\mathbf{u},\Phi-1,\mathbb{H})\in C([0,S];L^2\cap L^l),\\
(\mathbf{u},\mathbb{H})\in L^{\infty}(0,S;H^1\cap W^{1,3})\cap L^2(0,S;H^2)\cap L^n(0,S;W^{1,\infty}),\\
(\sqrt{s}\dot{\Phi},\sqrt{s}\dot{\mathbf{u}},\sqrt{s}\dot{\mathbb{H}})\in L^{\infty}(0,S;L^2),\\
(\sqrt{s}\nabla\dot{\Phi},\sqrt{s}\nabla\dot{\mathbf{u}},\sqrt{s}\nabla\dot{\mathbb{H}})\in L^2(0,S;L^2),
\end{cases}
\end{eqnarray}
where $l\in[2,6)$ and $1\le n\le\frac{4m}{5m-6}$.
\end{Theorem}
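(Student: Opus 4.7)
The plan is to establish Theorem \ref{Main result} through a standard three-stage continuation scheme: (i) construct a unique local strong solution under the initial regularity (\ref{(1.17**)}), (ii) derive a priori estimates that remain uniform on the full interval $[0,S]$ under the smallness hypothesis (\ref{(1.18**)}), and (iii) combine these to extend globally and prove uniqueness. For (i), since the data lie below $H^2$, I would mollify $(\varrho_0,\mathbf{u}_0,\Phi_0,\mathbb{H}_0)$ into a sequence of smooth initial data, produce classical local strong solutions in the spirit of \cite{SS,JWS}, and pass to the limit using the bounds below to obtain a strong solution on a maximal interval $[0,T^*)$ with the regularity stated in (\ref{(1.19**)}).

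For the a priori estimates I would first record the basic entropy-energy inequality: testing the momentum equation with $\mathbf{u}$, the induction equation with $\mathbb{H}$, and the temperature equation with an appropriate multiplier built from $\Phi$ yields $I(s)+\int_0^s D(\tau)\,d\tau\le I_0$, where $D$ controls $\iota\|\nabla\mathbf{u}\|_{L^2}^2$, $\nu\|\rot\mathbb{H}\|_{L^2}^2$ and $\kappa\|\nabla\Phi\|_{L^2}^2$. Next I would bound $\|\nabla\mathbf{u}\|_{L^p}$ and $\|\nabla\mathbb{H}\|_{L^p}$ by exploiting (\ref{Material derivative transformation}): elliptic regularity applied to $\Delta\mathbb{G}=\nabla\cdot(\varrho\dot{\mathbf{u}})$ and $\iota\Delta\mathscr{N}=\rot(\varrho\dot{\mathbf{u}})$ gives $\|\nabla\mathbb{G}\|_{L^p}+\|\nabla\mathscr{N}\|_{L^p}\lesssim\|\varrho\dot{\mathbf{u}}\|_{L^p}$, while the $\diverg$-$\rot$ decomposition recovers $\|\nabla\mathbf{u}\|_{L^p}$ up to algebraic nonlinearities in $(\varrho,\Phi,\mathbb{H})$. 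A parallel analysis of the induction equation controls $\|\nabla\mathbb{H}\|_{L^p}$. I would then close time-weighted estimates for the material derivatives by differentiating the momentum, induction and temperature equations along the flow, testing against $\dot{\mathbf{u}}$, $\dot{\mathbb{H}}$, $\dot{\Phi}$, and multiplying by $s$, producing the bounds on $(\sqrt{s}\dot{\mathbf{u}},\sqrt{s}\dot{\mathbb{H}},\sqrt{s}\dot{\Phi})$ in (\ref{(1.19**)}).

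The technical centrepiece and main obstacle is the $L^3$ gradient estimate of $\mathbf{u}$ and $\mathbb{H}$, which drives the continuation. The definitions of $\mathbb{G}$ and $\mathscr{N}$ in (\ref{modified derivatives}) are specifically adapted to absorb the Lorentz force $(\rot\mathbb{H})\times\mathbb{H}$ that obstructs a direct $\diverg$-$\rot$ estimate; after elliptic regularity one expresses $\|\nabla\mathbf{u}\|_{L^3}$ and $\|\nabla\mathbb{H}\|_{L^3}$ through $\|\varrho\dot{\mathbf{u}}\|_{L^3}$, $\|\dot{\mathbb{H}}\|_{L^3}$, and the nonlocal correctors $(-\Delta)^{-1}\nabla\cdot[(\rot\mathbb{H})\times\mathbb{H}]$ and $(-\Delta)^{-1}\rot[(\rot\mathbb{H})\times\mathbb{H}]$. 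The hard part is to control these correctors at the $W^{1,3}$ level: it requires interpolation between $H^1$ and $W^{1,3}$ of $\mathbb{H}$ combined with Gagliardo-Nirenberg and sharp elliptic $L^p$ bounds, and it is precisely here that the ranges $m\in[\tfrac{9}{2},6)$ and $n\le\tfrac{4m}{5m-6}$ enter. With $I_0$ sufficiently small, these estimates assemble into a Gronwall-type inequality whose solution stays bounded on any $[0,S]$, enabling continuation.

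For uniqueness I would take two solutions with identical initial data, write the system satisfied by the differences $(\bar\varrho,\bar{\mathbf{u}},\bar\Phi,\bar{\mathbb{H}})$, perform a weighted $L^2$ energy estimate using the regularity already established (in particular the $L^{\infty}(0,S;W^{1,3})$ bounds on $\mathbf{u}$ and $\mathbb{H}$ and the resulting Lipschitz-type control of $\log\varrho$), and conclude by Gronwall that the differences vanish. I expect the nonlocal magnetic correctors above, together with the absence of $W^{2,3}$-regularity on the initial data, to be the principal technical hurdles of the full proof.
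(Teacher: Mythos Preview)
Your overall continuation scheme and the uniqueness argument are essentially those of the paper, but your proposed route to the key $L^\infty(0,S;W^{1,3})$ bound on $(\mathbf{u},\mathbb{H})$ has a genuine gap. You propose to obtain $\|\nabla\mathbf{u}\|_{L^3}$ pointwise in time from the modified effective viscous flux $\mathbb{G}$ and vorticity $\mathscr{N}$, i.e.\ via an algebraic relation of the type $\|\nabla\mathbf{u}\|_{L^3}\lesssim\|\mathbb{G}\|_{L^3}+\|\mathscr{N}\|_{L^3}+\|\varrho\Phi-1\|_{L^3}+\||\mathbb{H}|^2\|_{L^3}$ together with $\|\nabla\mathbb{G}\|_{L^p}+\|\nabla\mathscr{N}\|_{L^p}\lesssim\|\varrho\dot{\mathbf{u}}\|_{L^p}$. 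The difficulty is that the only control available on $\varrho\dot{\mathbf{u}}$ and $\dot{\mathbb{H}}$ is time-weighted ($s\|\varrho\dot{\mathbf{u}}\|_{L^2}^2\in L^\infty_s$, $s\|\nabla\dot{\mathbf{u}}\|_{L^2}^2\in L^1_s$) or time-integrated ($\|\varrho\dot{\mathbf{u}}\|_{L^m}\in L^n_s$ with $n<4/3$); neither gives a uniform-in-time bound on $\|\varrho\dot{\mathbf{u}}\|_{L^3}$ or on $\|\mathbb{G}\|_{L^3}+\|\mathscr{N}\|_{L^3}$. Hence this decomposition alone cannot deliver $\sup_{0\le s\le S}\|\nabla\mathbf{u}(s)\|_{L^3}<\infty$, and your closure does not go through.

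What the paper does instead is derive a \emph{differential} inequality for the $L^3$ norms directly: apply $\nabla\cdot$ and $\rot$ to the momentum and induction equations, multiply respectively by $|\nabla\cdot\mathbf{u}|(\nabla\cdot\mathbf{u})$, $|\rot\mathbf{u}|\rot\mathbf{u}$, $|\nabla\cdot\mathbb{H}|(\nabla\cdot\mathbb{H})$, $|\rot\mathbb{H}|\rot\mathbb{H}$, and couple with the temperature equation tested by $\dot\Phi$. This produces an inequality of the form
\[
\frac{d}{ds}\Big(\|\nabla\Phi\|_{L^2}^2+\|\nabla\cdot\mathbf{u}\|_{L^3}^3+\|\rot\mathbf{u}\|_{L^3}^3+\|\nabla\cdot\mathbb{H}\|_{L^3}^3+\|\rot\mathbb{H}\|_{L^3}^3\Big)\le I\,\Psi(s)\big(1+\|\nabla\mathbf{u}\|_{L^3}^2+\|\nabla\mathbb{H}\|_{L^3}^2\big),
\]
with $\Psi\in L^1(0,S)$ thanks to the earlier $L^2_sH^2_x$ bounds on $(\mathbf{u},\mathbb{H})$, and closes by Gronwall. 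The role of $\mathbb{G}$ and $\mathscr{N}$ in the paper is restricted to the \emph{intermediate} estimates (controlling $\|\nabla\mathbf{u}\|_{L^6}$, $\|\nabla\cdot\mathbf{u}\|_{L^\infty}$, $\|\rot\mathbf{u}\|_{L^\infty}$ through $\|\varrho\dot{\mathbf{u}}\|_{L^m}$), not to the uniform-in-time $L^3$ bound. The restriction $m\ge\tfrac{9}{2}$ enters precisely in this direct $L^3$ energy argument: the terms involving $\nabla\varrho$ (e.g.\ $\mathbf{u}_s\cdot\nabla\varrho$ after substituting the momentum equation) are handled via $\|\nabla\varrho\|_{L^m}$ and an interpolation of the type $\|\nabla\mathbf{u}\|_{L^{9m/(4m-9)}}^3\le I(1+\|\nabla\mathbf{u}\|_{H^1}^2)(1+\|\nabla\mathbf{u}\|_{L^3}^2)$, which is where the lower bound on $m$ is needed. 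Your plan should replace the pointwise $\mathbb{G}$--$\mathscr{N}$ expression for $\|\nabla\mathbf{u}\|_{L^3}$ by this direct $L^3$ energy estimate.
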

\begin{Remark}
We emphasis that the spaces which the initial data belong to are weaker that $H^2$.
\end{Remark}

\begin{Remark}
 The result in Theorem \ref{Main result}  covers the conclusion of  that for isentropic compressible MHD systems by Zhang \cite{MR} that  bases on \cite{HXJG} and \cite{HJR}, once the pressure is  treated properly.
\end{Remark}

Fortunately, based on the previous results (see \cite{{XJG},{XJZG},{CFSA}}), we establish the lower order time-independent global a priori estimates to (\ref{MHD}), and we needn't introduce the analogous details of the process below (see Lemma 3.1).

And it's also difficult to achieve the estimates of $``\Vert\nabla\Phi\Vert_{L^2}"$, $``\Vert\nabla\mathbf{u}\Vert_{L^3}"$ and $``\Vert\nabla\mathbb{H}\Vert_{L^3}"$.
On the basis of the previous research (see \cite{{HJR},{MR}} ), we can deal with $\nabla(\varrho^{-1}\times(\ref{MHD})_2)$  from the outcomes of Lemma \ref{L (2.1**)}-\ref{(2.5**)}.
The key of our program is to cope with ``$\Vert(\nabla\cdot\mathbf{u})\Vert_{L^m}$", ``$\Vert(\nabla\cdot\mathbb{H})\Vert_{L^m}$", ``$\Vert\rot\mathbf{u}\Vert_{L^m}$" and ``$\Vert\rot\mathbb{H}\Vert_{L^m}$". Operating ``$\nabla\cdot$" to $(\ref{MHD})_2$ and $(\ref{MHD})_4$, we gain
\begin{align}\label{(1.21**)u}
&\varrho(\nabla\cdot \mathbf{u})_s+\varrho\mathbf{u}\cdot\nabla(\nabla\cdot \mathbf{u})-(2\iota+\aleph)\Delta(\nabla\cdot\mathbf{u})\notag\\
&\quad=-(\nabla\varrho)\cdot\mathbf{u}_s-\partial_j(\varrho\mathbf{u}^i)\partial_i\mathbf{u}^j
-\Delta\mathfrak{P}(\varrho,e)+\partial_j\mathbb{H}^i\partial_i\mathbb{H}^j-\frac{1}{2}\Delta\vert\mathbb{H}\vert^2,
\end{align}
and
\begin{align}\label{(1.21**)H}
(\nabla\cdot\mathbb{H})_s-\nu\Delta(\nabla\cdot\mathbb{H})
=\nabla\cdot(\mathbb{H}\cdot\nabla\mathbf{u}-\mathbf{u}\cdot\nabla\mathbb{H}-\mathbb{H}(\nabla\cdot\mathbf{u})).
\end{align}
Practically, multiplying (\ref{(1.21**)u}) by $\vert(\nabla\cdot \mathbf{u})\vert(\nabla\cdot \mathbf{u})$ and (\ref{(1.21**)H}) by $\vert(\nabla\cdot\mathbb{H})\vert(\nabla\cdot\mathbb{H})$, after adding up the resulting equations and integrating by parts over $\mathrm{R}^3$, we gain the estimates of ``$\Vert(\nabla\cdot\mathbf{u})\Vert_{L^3}$" and ``$\Vert(\nabla\cdot\mathbb{H})\Vert_{L^3}$". Likewise, ``$\Vert\rot\mathbf{u}\Vert_{L^3}$" and ``$\Vert\rot\mathbb{H}\Vert_{L^3}$" can also be achieved.

The major intention of our paper is restricted to the lower regularity $(\ref{(1.17**)})_3$ (i.e. $\mathbb{H}_0\in H^1\cap W^{1,3}$ but $\mathbb{H}_0\notin H^2$), so that we can't gain these important estimates (i.e. $\mathbb{H}_s\in L^{\infty}(0,S;L^2)\cap L^2(0,S;H^1) $) instantly. So we should figure out $\Vert\dot{\mathbb{H}}\Vert_{L^n (0,S;L^m)}$ and other estimates with $m$ and $n$ as in (\ref{(2.33**)}).
At the same time, from $(\ref{MHD})_2$,
\begin{align}
\mathbf{u}_s=\varrho^{-1}\Big(\iota \Delta \mathbf{u}
+(\iota+\aleph)\nabla(\nabla\cdot \mathbf{u}) +(\rot \mathbb{H}) \times\mathbb{H}-\nabla\mathfrak{P}(\varrho,\Phi)\Big)-\mathbf{u}\cdot\nabla{\mathbf{u}},
\end{align}
and for smooth scalar/vetor functions $\mathrm{O}$, $\mathsf{P}$ and $\mathrm{Q}$,
\begin{align}\label{(1.27**)}
\int(\nabla\mathrm{O}\times\nabla\mathrm{P})\cdot\mathrm{Q} dx
=-\int\mathrm{P}(\nabla\mathrm{O})\cdot(\rot\mathrm{Q}) dx,
\end{align}
we can close the estimates of $\Vert\nabla\mathbf{u}\Vert_{L^3}$, $\Vert\nabla\mathbb{H}\Vert_{L^3}$, and $\Vert\nabla\Phi\Vert_{L^2}$ (see [Lemma \ref{Lbbb}], \cite[Lemma 2.7]{HJR} and \cite[Lemma 2.7]{MR}).
In terms of these consequences, we can derive the estimates of temperature (see [Lemma \ref{(2.8**)}]), which are vital to the consideration of uniqueness.

The rough structure of our research is expressed here: Firstly, we will briefly introduce a variety of inequalities and preliminary knowledge. In Section 3, the global a priori estimates can be established. Eventually, we give a brief proof to Theorem \ref{Main result}.

\section{\large\bf Preliminaries}

\begin{Lemma}(Gagliardo-Nirenberg inequality)\label{Gagliardo-Nirenberg Inequality}(cf. \cite{LO})
Suppose that $1\le k,s,t\le\infty$,$0\le t<m$ and $\frac{r}{p}\le\alpha\le1$, satisfy
\begin{align*}
\frac{1}{k}-\frac{r}{q}=\alpha\Big(\frac{1}{s}-\frac{p}{q} \Big)+(1-\alpha)\frac{1}{t},
\end{align*}
where $p-r-\frac{q}{s}\in 0\cup Z^{+}$ and $\frac{r}{p}\le \alpha<1$. $\exists$ a constant $I=I(p,q,k,r,s,t)$, such that
\begin{align}\label{GN11}
\Vert\nabla^r\varphi \Vert_{L^k}
\le I\Vert\nabla^p\varphi\Vert_{L^s}^\alpha\Vert\varphi\Vert_{L^t}^{(1-\alpha)},
\end{align}
for all $\varphi\in W^{p,s}(\mathrm{R}^q)\cap L^t(\mathrm{R}^q)$. Particularly, for $q=3$, we have
\begin{eqnarray}\label{Special GN11}
\Vert\nabla\varphi \Vert_{L^6}\le I\Vert\nabla^2\varphi \Vert_{L^2},
\end{eqnarray}

\end{Lemma}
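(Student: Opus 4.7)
The plan is to prove the Gagliardo--Nirenberg inequality along the classical two-step route: first pin down the exponent relation by a dilation argument, then interpolate between a pure Sobolev endpoint and a trivial $L^t$ bound. Applying (\ref{GN11}) to $\varphi_\lambda(x):=\varphi(\lambda x)$ and comparing powers of $\lambda$ on the two sides shows that they scale identically only when the linear relation $\frac{1}{k}-\frac{r}{q}=\alpha\bigl(\frac{1}{s}-\frac{p}{q}\bigr)+(1-\alpha)\frac{1}{t}$ holds, so this identity is forced by homogeneity and cannot be relaxed.

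Next I would establish the extremal case $\alpha=1$ from the elementary Sobolev inequality $\Vert\varphi\Vert_{L^{q^\ast}}\le I\Vert\nabla\varphi\Vert_{L^s}$ with $q^\ast=qs/(q-s)$ for $1\le s<q$, which can be obtained either by the Loomis--Whitney slicing trick (for $s=1$, then interpolation to $1<s<q$) or by writing $\varphi$ as a Riesz potential of $\nabla\varphi$ and invoking Hardy--Littlewood--Sobolev. Iterating this bound applied to $\nabla^r\varphi$ yields $\Vert\nabla^r\varphi\Vert_{L^{k_\ast}}\le I\Vert\nabla^p\varphi\Vert_{L^s}$ with $k_\ast$ forced by scaling. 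The full inequality then follows by the multiplicative interpolation $\Vert\nabla^r\varphi\Vert_{L^k}\le\Vert\nabla^r\varphi\Vert_{L^{k_\ast}}^\alpha\Vert\nabla^r\varphi\Vert_{L^{k_0}}^{1-\alpha}$, where the second factor sits at the dual endpoint and is bounded by $\Vert\varphi\Vert_{L^t}$ via integration by parts against a suitable test function followed by H\"older.

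For the special case (\ref{Special GN11}), choosing $q=3$, $r=1$, $k=6$, $p=2$, $s=2$, $t=2$ and $\alpha=1$ makes the scaling identity trivially satisfied, and the estimate collapses to the classical Sobolev embedding $\Vert w\Vert_{L^6(\mathrm{R}^3)}\le I\Vert\nabla w\Vert_{L^2(\mathrm{R}^3)}$ applied to $w=\nabla\varphi$, a one-line consequence of $W^{1,2}(\mathrm{R}^3)\hookrightarrow L^6(\mathrm{R}^3)$.

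The main technical subtlety lies in the exclusion condition $p-r-\frac{q}{s}\in\{0\}\cup\mathbb{Z}^{+}$ paired with $\frac{r}{p}\le\alpha<1$, which is imposed precisely to rule out the degenerate $L^\infty$ endpoint where the naive inequality would fail and would need to be replaced by a BMO-type bound. For the parameter ranges actually invoked later in the paper (essentially $q=3$ with small values of $r$), this restriction is vacuous, so appealing to \cite{LO} for the full statement and using (\ref{Special GN11}) directly is all that is needed in the sequel.
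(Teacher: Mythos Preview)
The paper does not prove this lemma at all: it is stated with the citation ``(cf.\ \cite{LO})'' and immediately followed by the next lemma, so there is no in-paper argument to compare against. Your sketch is the standard classical route (scaling to fix the exponent relation, Sobolev embedding at the $\alpha=1$ endpoint, then multiplicative interpolation), which is exactly what Nirenberg's original paper does, so in spirit you are reproducing the cited source rather than diverging from it.

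One remark on the sketch itself: the step where you bound the ``dual endpoint'' factor $\Vert\nabla^r\varphi\Vert_{L^{k_0}}$ by $\Vert\varphi\Vert_{L^t}$ via ``integration by parts against a suitable test function followed by H\"older'' is the genuinely nontrivial part of the argument and is not as short as you suggest. In Nirenberg's proof this is handled by a one-dimensional lemma (bounding intermediate derivatives on an interval in terms of the top derivative and the function itself) combined with a careful induction and slicing; a single integration by parts does not quite do it. Since the paper only uses the inequality as a black box, this gap is irrelevant for the present purposes, but if you were writing a self-contained proof you would need to flesh out that step.
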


The following consequences referred from \cite[Lemma 2.3]{XJZG} will be essential to our prove.
\begin{Lemma}\label{four inequality}
Let $(\varrho,\mathbf{u},\Phi,\mathbb{H})$ be a smooth solution of $(\ref{MHD})$, $\exists$ a positive constant $I>0$ relying on $\iota$, $\aleph$ and $\mathcal{R}$ for $m\in[2,6]$, the estimates can be obtained:
\begin{align}
&\Big(\Vert\nabla \mathbb{G}\Vert_{L^m}+\Vert\nabla \mathscr{N}\Vert_{L^m}\Big)
\le I\Vert\varrho\dot{\mathbf{u}} \Vert_{L^m},\label{g,n estimates}\\
&\Big(\Vert \mathbb{G}\Vert_{L^m}+\Vert \mathscr{N}\Vert_{L^m}\Big)\le I\Vert \varrho \dot{\mathbf{u}}\Vert_{L^2}^{\frac{3m-6}{2m}}\Big(\Vert \varrho\Phi-1\Vert_{L^2}
+\Vert \nabla \mathbf{u}\Vert_{L^2}+\Vert \vert\mathbb{H}\vert^2\Vert_{L^2}\Big)^{\frac{6-m}{2m}},\label{(2.27*)}\\
&\Vert \nabla \mathbf{u}\Vert_{L^m}\le I\Big(\Vert \mathbb{G}\Vert_{L^m}
+\Vert \mathscr{N}\Vert_{L^m}+\Vert \varrho\Phi-1\Vert_{L^m}
+\Vert \vert \mathbb{H}\vert^2\Vert_{L^m}\Big),\label{(2.25*)}\\
&\Vert \nabla \mathbf{u}\Vert_{L^m}\le I\Vert \nabla \mathbf{u}\Vert_{L^2}^{\frac{6-m}{2m}}
\Big(\Vert \varrho\dot{\mathbf{u}}\Vert_{L^2}+\Vert \varrho\Phi-1\Vert_{L^6}+\Vert \mathbb{H}\cdot\nabla\mathbb{H}\Vert_{L^2}^2\Big)^{\frac{3m-6}{2m}}.\label{(2.28*)}
\end{align}
\end{Lemma}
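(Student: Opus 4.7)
All four estimates lean on three recurring ingredients: the elliptic identities $\Delta\mathbb{G}=\nabla\cdot(\varrho\dot{\mathbf{u}})$ and $\iota\Delta\mathscr{N}=\rot(\varrho\dot{\mathbf{u}})$ from (\ref{Material derivative transformation}); the Calder\'on--Zygmund boundedness of Riesz transforms in $L^m(\mathrm{R}^3)$ for $m\in(1,\infty)$; and the Hodge/div--curl inequality $\Vert\nabla\mathbf{u}\Vert_{L^m}\le I(\Vert\nabla\cdot\mathbf{u}\Vert_{L^m}+\Vert\rot\mathbf{u}\Vert_{L^m})$ for fields decaying at infinity. In addition, wherever a magnetic contribution appears I rewrite $(\rot\mathbb{H})\times\mathbb{H}=\mathbb{H}\cdot\nabla\mathbb{H}-\tfrac12\nabla|\mathbb{H}|^2$, and since $\nabla\cdot\mathbb{H}=0$ one has $\mathbb{H}\cdot\nabla\mathbb{H}=\nabla\cdot(\mathbb{H}\otimes\mathbb{H})$. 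This reduces the nonlocal tails $(-\Delta)^{-1}\nabla\cdot[(\rot\mathbb{H})\times\mathbb{H}]$ and $(-\Delta)^{-1}\rot[(\rot\mathbb{H})\times\mathbb{H}]$ inside $\mathbb{G},\mathscr{N}$ to zero-order singular integrals acting on the algebraic tensor $\mathbb{H}\otimes\mathbb{H}$, plus (in the first case) the pointwise term $\tfrac12|\mathbb{H}|^2$. Estimate (\ref{g,n estimates}) is then immediate: $\nabla\mathbb{G}=\nabla(-\Delta)^{-1}\nabla\cdot(\varrho\dot{\mathbf{u}})$ is a matrix of Riesz transforms of $\varrho\dot{\mathbf{u}}$, and $\nabla\mathscr{N}$ is handled identically up to the factor $\iota^{-1}$.

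For (\ref{(2.27*)}) I interpolate through Gagliardo--Nirenberg in $\mathrm{R}^3$, $\Vert\mathbb{G}\Vert_{L^m}\le I\Vert\mathbb{G}\Vert_{L^2}^{(6-m)/(2m)}\Vert\mathbb{G}\Vert_{L^6}^{(3m-6)/(2m)}$, use the Sobolev embedding $\Vert\mathbb{G}\Vert_{L^6}\le I\Vert\nabla\mathbb{G}\Vert_{L^2}\le I\Vert\varrho\dot{\mathbf{u}}\Vert_{L^2}$ from (\ref{g,n estimates}), and bound $\Vert\mathbb{G}\Vert_{L^2}$ directly from the definition: the first two pieces contribute $\Vert\nabla\mathbf{u}\Vert_{L^2}$ and $\Vert\varrho\Phi-1\Vert_{L^2}$, while the magnetic tail, after the above reduction, is controlled in $L^2$ by $\Vert|\mathbb{H}|^2\Vert_{L^2}$ via the CZ bound. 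The same treatment of $\mathscr{N}$ closes (\ref{(2.27*)}). Estimate (\ref{(2.25*)}) then follows by applying the div--curl inequality to $\mathbf{u}$ and inverting the definitions of $\mathbb{G}$ and $\mathscr{N}$: the leftover pieces are $\Vert\varrho\Phi-1\Vert_{L^m}$ together with two zero-order CZOs of $\mathbb{H}\otimes\mathbb{H}$, each bounded in $L^m$ by $\Vert|\mathbb{H}|^2\Vert_{L^m}$.

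For (\ref{(2.28*)}) I apply Gagliardo--Nirenberg directly to $\nabla\mathbf{u}$, $\Vert\nabla\mathbf{u}\Vert_{L^m}\le I\Vert\nabla\mathbf{u}\Vert_{L^2}^{(6-m)/(2m)}\Vert\nabla\mathbf{u}\Vert_{L^6}^{(3m-6)/(2m)}$, and plug in (\ref{(2.25*)}) at the endpoint $m=6$: the Calder\'on--Zygmund pieces $\Vert\mathbb{G}\Vert_{L^6}+\Vert\mathscr{N}\Vert_{L^6}$ are absorbed into $\Vert\varrho\dot{\mathbf{u}}\Vert_{L^2}$ through (\ref{g,n estimates}) and Sobolev, the pressure term is kept as $\Vert\varrho\Phi-1\Vert_{L^6}$, and $\Vert|\mathbb{H}|^2\Vert_{L^6}$ is converted via Sobolev together with $|\nabla|\mathbb{H}|^2|\le 2|\mathbb{H}||\nabla\mathbb{H}|$ into an expression controlled by (a power of) $\Vert\mathbb{H}\cdot\nabla\mathbb{H}\Vert_{L^2}$, matching the last term in the stated bound. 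The main obstacle throughout is precisely this bookkeeping of the magnetic contributions: the nonlocal tails in $\mathbb{G}$ and $\mathscr{N}$ must first be rewritten as genuine CZOs of the quadratic expression $\mathbb{H}\otimes\mathbb{H}$ before any $L^m$ estimate becomes available, and then the Gagliardo--Nirenberg exponents $(6-m)/(2m)$ and $(3m-6)/(2m)$ have to be matched consistently so that the right-hand sides involve only the $L^2$ and $L^6$ quantities stated; once this reduction is installed each of the four inequalities reduces to a short Riesz-transform / div--curl / interpolation calculation.
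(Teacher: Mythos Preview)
Your argument is correct and is precisely the standard route; the paper itself does not supply a proof but simply cites \cite[Lemma~2.3]{XJZG}, whose proof proceeds exactly through the Calder\'on--Zygmund bounds on (\ref{Material derivative transformation}), the div--curl inequality, and Gagliardo--Nirenberg interpolation between $L^2$ and $L^6$ that you describe. One small caveat on (\ref{(2.28*)}): your reduction gives $\Vert\,|\mathbb{H}|^2\Vert_{L^6}\le I\Vert\nabla|\mathbb{H}|^2\Vert_{L^2}\le I\Vert\,|\mathbb{H}|\,|\nabla\mathbb{H}|\,\Vert_{L^2}$, which is not literally $\Vert\mathbb{H}\cdot\nabla\mathbb{H}\Vert_{L^2}$ (and the square in the stated bound appears to be a misprint); in the applications later in the paper only the cruder form $\Vert\,|\mathbb{H}|\,|\nabla\mathbb{H}|\,\Vert_{L^2}$ is actually used, so this discrepancy is cosmetic.
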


\begin{Lemma}(Beale-Kato-Majda type inequality)(cf.\cite{{JTAR},{XJZS}})
Suppose that $h\in(3,+\infty)$ and $\nabla\mathrm{F}\in L^2(\mathrm{R}^3)\cap D^{1,h}(\mathrm{R}^3)$, $\exists$ a constant $I=I(h)>0$, the following estimate can be obtained:
\begin{align}
\Vert\nabla\mathrm{F}\Vert_{L^{\infty}(\mathrm{R}^3)}
&\le I\Big(\Vert\nabla\cdot\mathrm{F}\Vert_{L^{\infty}(\mathrm{R}^3)}
+\Vert\rot\mathrm{F}\Vert_{L^{\infty}(\mathrm{R}^3)}\Big)
\log\Big(e+\Vert\nabla^2\mathrm{F}\Vert_{L^h(\mathrm{R}^3)}\Big)\notag\\
&\quad+I\Big(1+\Vert\nabla\mathrm{F}\Vert_{L^2(\mathrm{R}^3)}\Big).\label{B-K-M inequality}
\end{align}
\end{Lemma}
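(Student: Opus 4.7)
The plan rests on the standard observation that $\nabla\mathrm{F}$ can be reconstructed from $(\nabla\cdot\mathrm{F},\rot\mathrm{F})$ via the vector identity $-\Delta\mathrm{F}=\rot(\rot\mathrm{F})-\nabla(\nabla\cdot\mathrm{F})$, which yields
\begin{align*}
\nabla\mathrm{F}=\nabla(-\Delta)^{-1}\rot(\rot\mathrm{F})-\nabla(-\Delta)^{-1}\nabla(\nabla\cdot\mathrm{F}).
\end{align*}
Thus $\nabla\mathrm{F}$ is a zero-order Calder\'on--Zygmund (Riesz-type) operator applied to $(\nabla\cdot\mathrm{F},\rot\mathrm{F})$. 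Since such singular integrals fail to be bounded on $L^\infty$ only by a logarithmic factor, this is exactly the structural source of the $\log(e+\|\nabla^2\mathrm{F}\|_{L^h})$ correction.

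Next I would fix a Littlewood--Paley decomposition $\mathrm{Id}=P_{\le 0}+\sum_{j\ge 1}\Delta_j$ and, for a free parameter $N>1$ to be optimized, split
\begin{align*}
\nabla\mathrm{F}=P_{\le 0}\nabla\mathrm{F}+\sum_{1\le j\le\log_2 N}\Delta_j\nabla\mathrm{F}+\sum_{j>\log_2 N}\Delta_j\nabla\mathrm{F}.
\end{align*}
The low-frequency part is controlled by Bernstein's inequality: $\|P_{\le 0}\nabla\mathrm{F}\|_{L^\infty}\lesssim\|P_{\le 0}\nabla\mathrm{F}\|_{L^2}\lesssim\|\nabla\mathrm{F}\|_{L^2}$. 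For the middle-frequency dyadic blocks I would use that each $\Delta_j$ commutes with the Riesz transforms from the representation above, and that $\Delta_j$ composed with a zero-order CZ multiplier has a Schwartz, scale-covariant convolution kernel whose $L^1$ norm is independent of $j$; hence $\|\Delta_j\nabla\mathrm{F}\|_{L^\infty}\lesssim\|\nabla\cdot\mathrm{F}\|_{L^\infty}+\|\rot\mathrm{F}\|_{L^\infty}$ uniformly in $j$, which summed over $\log_2 N$ blocks produces the $\log N$ factor. For the high-frequency tail I would invoke Bernstein in the reverse direction: since $h>3$,
\begin{align*}
\|\Delta_j\nabla\mathrm{F}\|_{L^\infty}\lesssim 2^{3j/h}\|\Delta_j\nabla\mathrm{F}\|_{L^h}\lesssim 2^{-j(1-3/h)}\|\nabla^2\mathrm{F}\|_{L^h},
\end{align*}
so the geometric sum converges and yields $N^{-(1-3/h)}\|\nabla^2\mathrm{F}\|_{L^h}$.

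Finally I would optimize by choosing $N:=\bigl(e+\|\nabla^2\mathrm{F}\|_{L^h}\bigr)^{h/(h-3)}$, which makes the high-frequency contribution $O(1)$ while $\log N\simeq\log(e+\|\nabla^2\mathrm{F}\|_{L^h})$. Collecting the three pieces gives exactly the asserted bound, with the constant $I$ depending only on $h$ through the factor $h/(h-3)$ coming from this optimization.

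The main technical point — and the step I expect to be the principal obstacle — is the uniform-in-$j$ middle-frequency bound: one must verify carefully that a zero-order Calder\'on--Zygmund multiplier, localized by $\Delta_j$, has convolution kernel with $L^1$ norm bounded independently of $j$. This is where the scale-invariance (homogeneity of degree zero) of the Riesz symbol is essential: the kernels of $\Delta_j$ composed with Riesz operators are dilations of a single Schwartz function, so their $L^1$ norms are all equal. Without this cancellation the middle-frequency sum would blow up polynomially in $N$ and the logarithmic form of the estimate would be lost.
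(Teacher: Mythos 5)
Your argument is correct and complete in its essentials. The paper itself does not prove this lemma; it only cites Beale--Kato--Majda and Huang--Li--Xin, where the inequality is obtained by a physical-space argument: one represents $\nabla\mathrm{F}$ through the Newtonian potential acting on $\nabla\cdot\mathrm{F}$ and $\rot\mathrm{F}$ and splits the singular convolution kernel into a near region, an intermediate annulus $\delta<|y|<R$, and a far region; the intermediate annulus contributes the factor $\log(R/\delta)$, and optimizing the cutoffs against $\Vert\nabla^2\mathrm{F}\Vert_{L^h}$ produces the logarithm. Your route replaces this by a frequency-space (Littlewood--Paley) decomposition: the low block is handled by Bernstein from $L^2$, the $O(\log N)$ middle blocks are each bounded uniformly because the degree-zero homogeneity of the symbols $\xi\otimes\xi/\vert\xi\vert^2$ and $\xi\otimes(\xi\times\cdot)/\vert\xi\vert^2$ makes the localized kernels exact dilations of a single Schwartz function (so their $L^1$ norms coincide), and the high-frequency tail sums geometrically since $1-3/h>0$; the choice $N=(e+\Vert\nabla^2\mathrm{F}\Vert_{L^h})^{h/(h-3)}$ then closes the estimate. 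The two proofs are equivalent in content --- the logarithm arises from the intermediate scales in either picture --- but yours makes the dependence of the constant on $h$ explicit (it degenerates like $h/(h-3)$ as $h\to3^{+}$, consistent with the hypothesis $h>3$) and adapts more readily to Besov-type refinements, while the cited physical-space proof is more elementary and self-contained. The one point you flag as delicate, the uniform-in-$j$ $L^1$ bound on the localized kernels, is resolved exactly as you describe, so there is no gap.
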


\section{\large\bf A priori estimates }
The segment will introduce the derivations of a priori estimates for (\ref{MHD}) on $\mathrm{R}^3\times[0,S]$.
The following outcomes derived form \cite{CFSA} play an important role to our proof.
\begin{Lemma}\label{L (2.1**)}
Let $\bar{\varrho}>2$ , $\bar{\Phi}>1$, and $\Omega\ge0$ (not necessarily small). Assume that
\begin{eqnarray}\label{(2.2**)}
\begin{cases}
0<\inf{\varrho_0}\le \sup{\varrho_0}<\bar{\varrho},\\
0<\inf{\Phi_0}\le \sup{\Phi_0}<\bar{\Phi},\\
\Vert\nabla \mathbf{u}_0\Vert_{L^2}, \Vert\nabla \mathbb{H}_0\Vert_{L^2}\le\Omega.\\
\end{cases}
\end{eqnarray}
$\exists$ constants $\mathcal{Q},\zeta>0$, relying on $\iota$, $\aleph$, $\kappa$, $\mathcal{R}$, $\gamma$, $\nu$, $\bar{\varrho}$, $\bar{\Phi}$ and $\Omega$, such that
if
\begin{align}\label{(2.3**)}
I_0\le \zeta,
\end{align}
where $I_0$ has been defined in (\ref{initial energy}), the following estimates can be obtained:
\begin{align}
&0<\varrho(x,s)\le2\bar{\varrho},\quad \forall(x,s)\in\mathrm{R}^3\times[0.S]\label{(2.4**)}\\
&\sup\limits_{s\in [0,S]}\Big(\Vert \nabla \mathbf{u}\Vert_{L^2}^2 +\Vert \nabla \mathbb{H}\Vert_{L^2}^2 \Big)
+\int_0^S\int\Bigl(\varrho\vert \dot{\mathbf{u}} \vert^2+\vert \dot{\mathbb{H}} \vert^2\Big)dxds \le\mathcal{Q},\label{(2.5**)}\\
&\sup\limits_{s \in [0,S]}
\Big(\Vert\varrho-1\Vert_{L^2}^2+\Vert\sqrt{\varrho}\mathbf{u}\Vert_{L^2}^2+\Vert\mathbb{H}\Vert_{L^2}^2
+\Vert\sqrt{\varrho}(\Phi-1)\Vert_{L^2}^2\Big)\notag\\
&\quad+\int_0^S \Big(\Vert\nabla\Phi\Vert_{L^2}^2+\Vert \nabla \mathbf{u}\Vert_{L^2}^2
+\Vert \nabla \mathbb{H}\Vert_{L^2}^2 \Big)ds\le\mathcal{Q}I_0^{\frac{1}{4}},\label{aaaa}\\
&\sup\limits_{s \in [0,S]}\Big(\psi^2\Vert \nabla \Phi\Vert_{L^2}^2+\psi(\Vert\nabla \mathbf{u}\Vert_{L^2}^2
+\Vert\nabla \mathbb{H}\Vert_{L^2}^2)+\psi^2\int(\varrho\vert\dot{\mathbf{u}}\vert^2
+\vert\dot{\mathbb{H}}\vert^2)dx \Big)\notag\\
&\quad+\int_0^S\int\Big(\psi(\varrho \vert \dot{\mathbf{u}} \vert^2+\vert \dot{\mathbb{H}}\vert^2)
+\psi^2(\vert \nabla\dot{\mathbf{u}} \vert^2+\vert \nabla\dot{\mathbb{H}} \vert^2) +\psi^2\varrho\vert\dot{\Phi}\vert^2 \Big)dxds\le\mathcal{Q}I_0^{\frac{1}{6}},\label{bbb}\\
&\sup\limits_{s \in [0,S]}\psi^4 \int \varrho\vert \dot{\Phi}\vert^2dx
+\int_0^S\int\psi^4\vert \nabla \dot{\Phi} \vert^2dxds \le\mathcal{Q}I_0^{\frac{1}{8}}.\label{(2.8**)}
\end{align}
where $\psi(s)\stackrel{\triangle}{=} \min \{ 1,s \} $.
\end{Lemma}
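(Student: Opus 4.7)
Since Lemma \ref{L (2.1**)} is the direct MHD analogue of the basic-energy and Hoff-type estimates from \cite{CFSA,XJG,XJZG}, the plan is to run a continuity (bootstrap) argument under the smallness hypothesis \eqref{(2.3**)}. On a maximal subinterval $[0,T^\ast]\subset[0,S]$ on which a local smooth solution exists, I would assume the weakened bounds
\begin{equation*}
\sup_{s\in[0,T^\ast]}\bigl(\Vert\nabla\mathbf{u}\Vert_{L^2}^2+\Vert\nabla\mathbb{H}\Vert_{L^2}^2\bigr)\le 2\mathcal{Q},\qquad 0<\varrho(x,s)\le \tfrac{7}{4}\bar{\varrho},
\end{equation*}
and aim to close each of \eqref{(2.4**)}--\eqref{(2.8**)} strictly, so that $T^\ast=S$ by a standard continuation argument.

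First I would establish \eqref{aaaa} by the basic entropy/energy method: multiply $(\ref{MHD})_2$ by $\mathbf{u}$ and $(\ref{MHD})_4$ by $\mathbb{H}$, then combine with the thermodynamic identity obtained from $(\ref{MHD})_1$ and $(\ref{MHD})_3$ to recover monotonicity of the free energy functional
$$\mathcal{E}(s)=\int\Bigl(\tfrac{1}{2}\varrho|\mathbf{u}|^2+\tfrac{1}{2}|\mathbb{H}|^2+\mathcal{R}(\varrho\log\varrho+1-\varrho)+\mathbb{C}_v\varrho(\Phi-\log\Phi-1)\Bigr)dx,$$
whose dissipation controls $\iota\Vert\nabla\mathbf{u}\Vert_{L^2}^2$, $\nu\Vert\rot\mathbb{H}\Vert_{L^2}^2$, and $\kappa\Vert\nabla\Phi/\sqrt{\Phi}\Vert_{L^2}^2$. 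Since $\mathcal{E}(0)=I_0\le\zeta$, Gr\"onwall together with the bootstrap hypothesis yields the $I_0^{1/4}$-smallness in \eqref{aaaa} via interpolation between the dissipation terms and the crude upper bounds on $\varrho,\Phi$.

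For the Hoff-type estimate \eqref{(2.5**)}, the plan is to test $(\ref{MHD})_2$ against $\dot{\mathbf{u}}$ and $(\ref{MHD})_4$ against $\dot{\mathbb{H}}$, producing a differential inequality of the form
$$\frac{d}{ds}\mathcal{F}(s)+\int\bigl(\varrho|\dot{\mathbf{u}}|^2+|\dot{\mathbb{H}}|^2\bigr)dx\le \text{bilinear terms in } \nabla\mathbf{u},\nabla\mathbb{H},$$
with $\mathcal{F}\sim\Vert\nabla\mathbf{u}\Vert_{L^2}^2+\Vert\nabla\mathbb{H}\Vert_{L^2}^2$, and where the pressure and Lorentz contributions are rewritten via the modified effective flux $\mathbb{G}$ and vorticity $\mathscr{N}$ of \eqref{modified derivatives} and then bounded by Lemma \ref{four inequality}. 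The pointwise density bound \eqref{(2.4**)} then follows from Hoff's characteristic argument: rewriting $(\ref{MHD})_1$ as an ODE for $\log\varrho$ along the flow, whose forcing is controlled by $\Vert\mathbb{G}\Vert_{L^\infty}$ and hence by $\Vert\varrho\dot{\mathbf{u}}\Vert_{L^2}^{1/2}\Vert\nabla(\varrho\dot{\mathbf{u}})\Vert_{L^2}^{1/2}$ plus magnetic lower order pieces, which are integrable in $s$ thanks to \eqref{(2.5**)}.

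The weighted estimates \eqref{bbb}--\eqref{(2.8**)} are obtained by materially differentiating $(\ref{MHD})_2$, $(\ref{MHD})_3$, $(\ref{MHD})_4$ to produce transport-diffusion equations for $\dot{\mathbf{u}}$, $\dot{\Phi}$, $\dot{\mathbb{H}}$ (modulo lower order commutators), testing them against $\psi^2\dot{\mathbf{u}}$, $\psi^4\dot{\Phi}$, $\psi^2\dot{\mathbb{H}}$ respectively, and summing. The weight $\psi=\min\{1,s\}$ absorbs the singularity at $s=0$ stemming from the lower regularity initial data. The main obstacle I foresee is controlling the commutator terms arising from differentiating the pressure and the Lorentz force, such as $\int\psi^2\dot{\mathbf{u}}\cdot\partial_s(\nabla\mathfrak{P})\,dx$ and $\int\psi^2\dot{\mathbf{u}}\cdot\partial_s\bigl((\rot\mathbb{H})\times\mathbb{H}\bigr)dx$; these must be absorbed into the dissipation using Lemma \ref{four inequality} combined with a small factor extracted from $I_0^{1/k}$, which is exactly the mechanism producing the decreasing exponents $\tfrac{1}{4},\tfrac{1}{6},\tfrac{1}{8}$. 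Once each display is verified to be strictly better than the bootstrap hypothesis for $\zeta$ small, the continuity argument gives $T^\ast=S$ and completes the lemma.
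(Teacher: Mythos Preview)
Your proposal is a correct and standard outline, and it matches the methodology of the references the paper relies on; note, however, that the paper itself does not prove Lemma~\ref{L (2.1**)} at all but simply imports it verbatim from \cite{CFSA} (with background from \cite{XJG,XJZG}), stating explicitly that ``we needn't introduce the analogous details of the process.'' So there is no in-paper proof to compare against: what you have sketched---the bootstrap/continuation framework, the entropy identity for $\mathcal{E}(s)$, the Hoff-type testing of $(\ref{MHD})_2,(\ref{MHD})_4$ against $\dot{\mathbf{u}},\dot{\mathbb{H}}$, the characteristic ODE for $\log\varrho$ via the modified effective flux $\mathbb{G}$, and the $\psi$-weighted material-derivative estimates---is precisely the machinery developed in those cited works, and your identification of the commutators from $\partial_s(\nabla\mathfrak{P})$ and $\partial_s((\rot\mathbb{H})\times\mathbb{H})$ as the terms requiring the $I_0^{1/k}$ smallness is the correct diagnosis.
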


\begin{Lemma}\label{L(2.2**)}
On the premise of Lemma $3.1$, $\exists$ a constant $I>0$ relying on $S$ such that the following estimate can be obtained:
\begin{align}
&\sup_{0\le s\le S}s\Big(\Vert\nabla\Phi\Vert_{L^2}^2+\Vert\varrho^{\frac{1}{2}}\dot{\mathbf{u}}\Vert_{L^2}^2
+\Vert\dot{\mathbb{H}}\Vert_{L^2}^2\Big)\notag\\
&\quad+\int_0^Ss\Big(\Vert\varrho^{\frac{1}{2}}\dot{\Phi}\Vert_{L^2}^2+\Vert\nabla^2\Phi\Vert_{L^2}^2
+\Vert\nabla\dot{\mathbf{u}}\Vert_{L^2}^2+\Vert\nabla\dot{\mathbb{H}}\Vert_{L^2}^2\Big)ds\le I(S).\label{(2.10**)}
\end{align}
\end{Lemma}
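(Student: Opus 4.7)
The plan is to derive three time-$s$-weighted energy estimates — one for the velocity material derivative $\dot{\mathbf{u}}$, one for the magnetic material derivative $\dot{\mathbb{H}}$, and one for the temperature $\Phi$ — and close them simultaneously by a coupled Gronwall argument on $[0,S]$. Since $s\le S$, the resulting bound is allowed to depend on $S$, which removes the need for the truncating weight $\psi=\min\{1,s\}$ used in Lemma \ref{L (2.1**)}. The weight $s$ vanishing at $s=0$ is exactly what compensates for the fact that $\mathbf{u}_0,\mathbb{H}_0\notin H^2$, so that $\dot{\mathbf{u}}(\cdot,0)$ and $\dot{\mathbb{H}}(\cdot,0)$ need not lie in $L^2$.

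First I would apply the operator $\partial_s + \diverg(\cdot\,\mathbf{u})$ to the momentum equation $(\ref{MHD})_2$, test with $\dot{\mathbf{u}}$, and multiply by $s$. The outcome is a differential inequality of the schematic form
\begin{align*}
\frac{d}{ds}\Big(s\int \varrho|\dot{\mathbf{u}}|^2\,dx\Big) + s\int \big(\iota|\nabla\dot{\mathbf{u}}|^2 + (\iota+\aleph)(\diverg\dot{\mathbf{u}})^2\big)\,dx \le \int \varrho|\dot{\mathbf{u}}|^2\,dx + s\,\mathfrak{R}_{\mathbf{u}},
\end{align*}
in which the first right-hand term is integrable in $s$ by (\ref{(2.5**)}), and $\mathfrak{R}_{\mathbf{u}}$ collects the commutators coming from convection, the pressure $\mathfrak{P}=\mathcal{R}\varrho\Phi$, and the Lorentz source $(\rot\mathbb{H})\times\mathbb{H}$. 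These commutators are controlled by Gagliardo--Nirenberg (Lemma \ref{Gagliardo-Nirenberg Inequality}), the effective-viscous-flux/vorticity decomposition (Lemma \ref{four inequality}), and the a priori bounds of Lemma \ref{L (2.1**)}, with time-integrable factors supplied by (\ref{aaaa})--(\ref{bbb}) for the Gronwall step. An entirely parallel but simpler computation starting from $(\ref{MHD})_4$ — differentiating in $s$, testing with $\dot{\mathbb{H}}$, and multiplying by $s$ — produces the corresponding estimate for $s\|\dot{\mathbb{H}}\|_{L^2}^2$ and $\int_0^S s\|\nabla\dot{\mathbb{H}}\|_{L^2}^2\,ds$.

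For the temperature contribution, I would test $(\ref{MHD})_3$ with $\Phi_s$ (equivalently with $\dot{\Phi}$, up to a controllable convective term) and multiply by $s$, generating $s\,\|\nabla\Phi\|_{L^2}^2$ on the sup-side and $\int_0^S s\,\|\varrho^{1/2}\dot\Phi\|_{L^2}^2\,ds$ on the dissipation side. The dissipation sources $|\mathfrak{D}(\mathbf{u})|^2$, $(\diverg\mathbf{u})^2$ and $|\rot\mathbb{H}|^2$ are handled by bounding $\|\nabla\mathbf{u}\|_{L^4}^2$ and $\|\nabla\mathbb{H}\|_{L^4}^2$ through Lemma \ref{four inequality} and Sobolev embedding. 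Finally, $\int_0^S s\,\|\nabla^2\Phi\|_{L^2}^2\,ds$ is obtained by rewriting $(\ref{MHD})_3$ as
\begin{align*}
\kappa\Delta\Phi = \mathbb{C}_v\varrho\dot\Phi + \mathfrak{P}(\diverg\mathbf{u}) - 2\iota|\mathfrak{D}(\mathbf{u})|^2 - \aleph(\diverg\mathbf{u})^2 - \nu|\rot\mathbb{H}|^2,
\end{align*}
applying standard elliptic $L^2$ regularity, and integrating against the weight $s$ using the preceding bounds.

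The main obstacle will be closing the inequality for $\dot{\mathbf{u}}$: differentiating the Lorentz force in $s$ produces contributions involving $\mathbb{H}_s\cdot\nabla\mathbb{H}$ and $\mathbb{H}\cdot\nabla\mathbb{H}_s$. Since at this stage $\mathbb{H}$ only has $H^1\cap W^{1,3}$ spatial regularity and there is no $L^\infty_s H^2_x$ control on $\mathbb{H}$, these terms must be rewritten via $\mathbb{H}_s = \dot{\mathbb{H}} - \mathbf{u}\cdot\nabla\mathbb{H}$ and paired against the $s\|\dot{\mathbb{H}}\|_{L^2}^2$ estimate from the magnetic computation above. Consequently, the velocity and magnetic estimates cannot be handled sequentially but must be coupled in a single Gronwall argument; once this simultaneous bound is established, the temperature estimate decouples and the elliptic regularity step completes the proof.
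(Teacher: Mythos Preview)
Your overall scheme coincides with the paper's: apply $s\dot{\mathbf{u}}^j[\partial_s+\diverg(\mathbf{u}\,\cdot)]$ to $(\ref{MHD})_2^j$ and $s\dot{\mathbb{H}}^j[\partial_s+\diverg(\mathbf{u}\,\cdot)]$ to $(\ref{MHD})_4^j$, test $(\ref{MHD})_3$ with $s\dot\Phi$, and recover $\int_0^S s\|\nabla^2\Phi\|_{L^2}^2\,ds$ afterwards by elliptic regularity. Your identification of the Lorentz-force commutator as the main coupling, handled via $\mathbb{H}_s=\dot{\mathbb{H}}-\mathbf{u}\cdot\nabla\mathbb{H}$, is exactly what the paper does in its terms $\mathcal{A}_5$--$\mathcal{A}_8$.

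There is, however, one structural error in the plan: the temperature estimate does \emph{not} decouple from the velocity/magnetic pair. Applying the material derivative to the pressure and using mass conservation gives $\mathfrak{P}_s+\diverg(\mathfrak{P}\mathbf{u})=\mathcal{R}\varrho\dot\Phi$, so the pressure commutator in the $\dot{\mathbf{u}}$-inequality unavoidably leaves a contribution $I\,s\|\varrho^{1/2}\dot\Phi\|_{L^2}^2$ on the right-hand side (this is the first term in the paper's (\ref{(2.17**)})). That quantity is not available from Lemma~\ref{L (2.1**)} with the weight $s$ (estimate (\ref{bbb}) only controls $\int\psi^2\varrho|\dot\Phi|^2\,ds$, which is weaker for $s\le 1$), so it can only be absorbed by the dissipation produced in the temperature inequality itself. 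Conversely, if you treat the heating sources by the paper's device $\int|\rot\mathbb{H}|^2\dot\Phi=\big(\int|\rot\mathbb{H}|^2\Phi\big)_s-\ldots$, a small multiple of $s\|\nabla\dot{\mathbf{u}}\|_{L^2}^2+s\|\nabla\dot{\mathbb{H}}\|_{L^2}^2$ reappears on the temperature side. Hence all three inequalities must be summed (the paper adds (\ref{(2.17**)}) to a large multiple $I^\ast$ of (\ref{(2.25**)})) before integrating in time. A second point: the closure is not a standard Gronwall. The residual cubic term $\int_0^S s\|\nabla\mathbf{u}\|_{L^4}^4\,ds\le C\int_0^S s\,\mathcal{F}^3\,ds$, with $\mathcal{F}=1+\|\varrho^{1/2}\dot{\mathbf{u}}\|_{L^2}+\|\dot{\mathbb{H}}\|_{L^2}+\|\nabla\Phi\|_{L^2}$, is bounded via (\ref{(2.28**)}) and the integrability $\int_0^S\mathcal{F}^2\,ds\le C$ from (\ref{(2.5**)})--(\ref{aaaa}) by $C+\tfrac18\sup_{0\le s\le S}s\,\mathcal{F}^2$, and then absorbed into the left-hand supremum rather than iterated through an exponential factor.
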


\begin{proof}
First of all, operating $s\dot{\mathbf{u}}^j[\frac{\partial}{\partial s}+\nabla\cdot (\mathbf{u}\cdot)]$ to $(\ref{MHD})_2^j$,
$s\dot{\mathbb{H}}^j[\frac{\partial}{\partial s}+\nabla\cdot (\mathbf{u}\cdot)]$ to $(\ref{MHD})_4^j$, adding up and integrating the resulting equations over $\mathrm{R}^3$, one has
\begin{align}
&\frac{1}{2}\frac{d}{ds}\Big(\int s(\varrho\vert\dot{\mathbf{u}}\vert^2+\vert\dot{\mathbb{H}}\vert^2)dx\Big)\notag\\
&\quad=\frac{1}{2}\int (\varrho\vert\dot{\mathbf{u}}\vert^2+\vert\dot{\mathbb{H}}\vert^2)dx
-\int s\dot{\mathbf{u}}^j\Big(\partial_j\mathfrak{P}_s+\nabla\cdot(\mathbf{u}\partial_j\mathfrak{P})\Big)dx\notag\\
&\qquad+\iota\int s\dot{\mathbf{u}}^j\Big(\Delta \mathbf{u}_s^j+\nabla\cdot(\mathbf{u}\Delta \mathbf{u}^j)\Big)dx\notag\\
&\qquad+(\iota+\aleph)\int s\dot{\mathbf{u}}^j\Big(\partial_s\partial_j(\nabla\cdot \mathbf{u})
+\nabla\cdot(\mathbf{u}\partial_j(\nabla\cdot \mathbf{u}))\Big)dx\notag\\
&\qquad+\nu\int s\dot{\mathbb{H}}^j\Big(\Delta \mathbb{H}_s^j+\nabla\cdot(\mathbf{u}\Delta \mathbb{H}^j)\Big)dx\notag\\
&\qquad+\int s\dot{\mathbf{u}}^j\Big[\big((\rot \mathbb{H})\times \mathbb{H}\big)_s^j
+\nabla\cdot\big[\mathbf{u}\cdot\big((\rot \mathbb{H})\times \mathbb{H}\big)^j\big]\Big]dx\notag\\
&\qquad-\int s\dot{\mathbb{H}}^j\nabla\cdot(\mathbf{u}\dot{\mathbb{H}}^j) dx+\int s\dot{\mathbb{H}}^j\Big((\mathbb{H}\cdot\nabla \mathbf{u})_s^j+\nabla\cdot\big(\mathbf{u}(\mathbb{H}\cdot\nabla \mathbf{u})^j\big)\Big)dx\notag\\
&\qquad-\int s\dot{\mathbb{H}}^j\Big((\mathbb{H}(\nabla\cdot \mathbf{u}))^j_s+\nabla\cdot(\mathbf{u}(\mathbb{H}\nabla\cdot \mathbf{u})^j)\Big)dx\notag\\
&\quad\stackrel{\triangle}{=}\frac{1}{2}\int (\varrho\vert\dot{\mathbf{u}}\vert^2
+\vert\dot{\mathbb{H}}\vert^2)dx+\sum_{i=1}^{8}\mathcal{A}_i.\label{(2.11**)}
\end{align}

From (2.12)-(2.15) in \cite[Lemma 2.2]{HJR}, we have get the estimates of $\mathcal{A}_{1}-\mathcal{A}_{3}$, so we only need to estimate $\mathcal{A}_{4}-\mathcal{A}_{8}$. From integration by parts, we find that there exists a positive constant $I$ such that
\begin{align}\label{G 3' estimate}
\mathcal{A}_{4}
&= -\nu\int s(\vert\nabla\dot{\mathbb{H}}\vert^2+\partial_l\dot{\mathbb{H}}^j\partial_k \mathbf{u}^k\partial_l\mathbb{H}^j
-\partial_k\dot{\mathbb{H}}^j\partial_k\mathbf{u}^l\partial_l\mathbb{H}^j
-\partial_k\dot{\mathbb{H}}^j\partial_l\mathbf{u}^k\partial_l{\mathbb{H}}^j)dx\notag\\
&\le -\frac{7\nu}{8}s\Vert\nabla\dot{\mathbb{\mathbb{H}}}\Vert_{L^2}^2
+Is(\Vert\nabla \mathbb{H}\Vert_{L^4}^4+\Vert\nabla \mathbf{u}\Vert_{L^4}^4).
\end{align}
For $\mathcal{A}_{5}$, we need to use $(\ref{modified derivatives})_1$,
\begin{align}\label{G 5' transform}
\mathcal{A}_{5}&=\int s\dot{\mathbf{u}}^j(\mathbb{H}\cdot\nabla\mathbb{H})^j_s
-s\dot{\mathbf{u}}^j\frac{1}{2}\nabla(\vert\mathbb{H}\vert^2)^j_s
-s\nabla\dot{\mathbf{u}}^j\mathbf{u}(\mathbb{H}\cdot\nabla\mathbb{H})^j
+s\nabla\dot{\mathbf{u}}^j\mathbf{u}\frac{1}{2}\nabla(\vert\mathbb{H}\vert^2)^jdx \notag\\
&\stackrel{\triangle}{=}\sum_{j=1}^{4}\mathcal{A}_{5,j},
\end{align}
after using integration by parts, (\ref{GN11}), (\ref{aaaa}) and (\ref{bbb}), we get
\begin{align}\label{G 6' estimate}
\mathcal{A}_{5,1}&=\int s\dot{\mathbf{u}}^j(\dot{{\mathbb{H}}}\cdot\nabla)\mathbb{H}^j
-s\dot{\mathbf{u}}^j(\mathbf{u}\cdot\nabla\mathbb{H})\cdot\nabla\mathbb{H}^j
-s\mathbb{H}\cdot\nabla\dot{\mathbf{u}}^j\dot{\mathbb{H}}^j
+s(\mathbb{H}\cdot\nabla)\dot{\mathbf{u}}^j(\mathbf{u}\cdot\nabla)\mathbb{H}^j dx \notag\\
&\le Is\Vert\nabla\dot{\mathbf{u}}\Vert_{L^2}\Vert\dot{\mathbb{H}}\Vert_{L^2}^{\frac{1}{2}}
\Vert\nabla\dot{\mathbb{H}}\Vert_{L^2}^{\frac{1}{2}}\Vert\nabla\mathbb{H}\Vert_{L^2}
+Is\Vert\nabla\dot{\mathbf{u}}\Vert_{L^2}\Vert\mathbb{H}\Vert_{L^2}^{\frac{1}{2}}
\Vert\nabla\mathbb{H}\Vert_{L^2}^{\frac{1}{2}}\Vert\nabla\mathbb{\dot{H}}\Vert_{L^2}\notag\\
&\quad+Is\Vert\nabla\dot{\mathbf{u}}\Vert_{L^2}\Vert\nabla\mathbf{u}\Vert_{L^2}
\Vert\nabla\mathbb{H}\Vert_{L^2}\Vert\nabla\mathbb{H}\Vert_{L^6}  \notag\\
&\le \frac{\mu}{32}s\Vert\nabla\dot{\mathbf{u}}\Vert_{L^2}^2
+\frac{\nu}{32}s\Vert\nabla\dot{\mathbb{H}}\Vert_{L^2}^2+I\Vert\dot{\mathbb{H}}\Vert_{L^2}^2+I\Vert\nabla \mathbb{H}\Vert_{L^6}^2,
\end{align}
similarly, we can deal with $\mathcal{A}_{5,2}-\mathcal{A}_{5,4}$ and also $\mathcal{A}_{6}-\mathcal{A}_{8}$. So
\begin{align}\label{G 4,5' estimate}
\sum_{i=5}^{8}\mathcal{A}_{i}&\le \frac{\mu}{8}s\Vert\nabla\dot{\mathbf{u}}\Vert_{L^2}^2
+\frac{\nu}{8}s\Vert\nabla\dot{\mathbb{H}}\Vert_{L^2}^2+I\Vert\dot{\mathbb{H}}\Vert_{L^2}^2+I\Big(\Vert\nabla \mathbf{u}\Vert_{L^6}^2+\Vert\nabla \mathbb{H}\Vert_{L^6}^2\Big).
\end{align}
And substituting $\mathcal{A}_{1}-\mathcal{A}_{8}$ into
(\ref{(2.11**)}), we get
\begin{align}\label{(2.17**)}
&\frac{d}{ds}\Big(\int s(\varrho\vert\dot{\mathbf{u}}\vert^2+\vert\dot{\mathbb{H}}\vert^2)dx\Big)
+s\Big(\Vert\nabla\dot{\mathbf{u}}\Vert_{L^2}^2+\Vert\nabla\dot{\mathbb{H}}\Vert_{L^2}^2\Big)\notag\\
&\quad\le I\Big(s\Vert\varrho^\frac{1}{2}\dot{\Phi}\Vert_{L^2}^2\Big)
+I\Big(\Vert\varrho^\frac{1}{2}\dot{\mathbf{u}}\Vert_{L^2}^2+\Vert\dot{\mathbb{H}}\Vert_{L^2}^2+\Vert\nabla \mathbf{u}\Vert_{L^2}^2\Big)\\
&\qquad+Is\Big(\Vert\nabla \mathbf{u}\Vert_{L^3}^2\Vert\nabla\Phi\Vert_{L^2}^2
+\Vert\nabla \mathbf{u}\Vert_{L^4}^4+\Vert\nabla \mathbb{H}\Vert_{L^4}^4\Big)+I\Big(\Vert\nabla \mathbf{u}\Vert_{L^6}^2+\Vert\nabla \mathbb{H}\Vert_{L^6}^2\Big).\notag
\end{align}

Secondly, multiplying $(\ref{MHD})_3$ by $s\dot{\Phi}$ and integrating the result over $\mathrm{R}^3$, we get
\begin{align}
\frac{\kappa}{2}\frac{d}{ds}\Big(s\Vert\nabla\Phi\Vert_{L^2}^2\Big)
&=\frac{\kappa}{2}\Vert\nabla\Phi\Vert_{L^2}^2-C_v\int s\varrho\vert\dot{\Phi}\vert^2dx\notag\\
&\quad+\lambda\int s(\nabla\cdot \mathbf{u})^2\dot{\Phi}dx
-\kappa\int s\nabla\Phi \cdot\nabla(\mathbf{u}\cdot\nabla\Phi)dx\notag\\
&\quad+2\mu\int s\vert\mathfrak{D}(\mathbf{u})\vert^2\dot{\Phi}dx-R\int s\varrho\Phi(\nabla\cdot \mathbf{u})\dot{\Phi}dx\notag\\
&\quad+\nu\int s\vert\rot \mathbb{H}\vert^2\dot{\Phi}dx\notag\\
&\stackrel{\triangle}{=}\frac{\kappa}{2}\Vert\nabla\Phi\Vert_{L^2}^2
-C_v\int s\varrho\vert\dot{\Phi}\vert^2dx+\sum_{i=1}^{5}\mathcal{B}_i.\label{(2.18**)}
\end{align}
From (2.22)-(2.24) in \cite[Lemma 2.2]{HJR}, it has gotten the estimates of $\mathcal{B}_1-\mathcal{B}_3$.
Using the $L^p$-estimates of elliptic equations, $(\ref{MHD})_3$ and (\ref{GN11}), we obtain
\begin{align}
\Vert\nabla^2\Phi\Vert_{L^2}
&\le I\Big(\Vert\varrho^\frac{1}{2}\dot{\Phi}\Vert_{L^2}+\Vert\Phi\nabla \mathbf{\mathbf{u}}\Vert_{L^2}
+\Vert\nabla \mathbf{\mathbf{u}}\Vert_{L^4}^2+\Vert\nabla \mathbb{H}\Vert_{L^4}^2\Big)\notag\\
&\le I\Big(\Vert\varrho^\frac{1}{2}\dot{\Phi}\Vert_{L^2}
+\Vert\nabla\Phi\Vert_{L^2}\Vert\nabla \mathbf{\mathbf{u}}\Vert_{L^3}
+\Vert\nabla \mathbf{\mathbf{u}}\Vert_{L^2}+\Vert\nabla \mathbf{\mathbf{u}}\Vert_{L^4}^2+\Vert\nabla \mathbb{H}\Vert_{L^4}^2\Big).\label{(2.20**)}
\end{align}
Analogous to (2.19) in \cite[Lemma 2.2]{HJR}, using (\ref{GN11}), (\ref{(2.4**)}) and (\ref{(2.5**)}) yields that
\begin{align}
\mathcal{B}_4
&\le Is\Vert\nabla \mathbf{\mathbf{u}}\Vert_{L^2}\Vert\nabla\Phi\Vert_{L^2}^\frac{1}{2}
\Vert\nabla^2\Phi\Vert_{L^2}^\frac{3}{2}\notag\\
&\le \frac{\mathbb{C}_v}{6}(s\Vert\varrho^\frac{1}{2}\dot{\Phi}\Vert_{L^2}^2)\label{(2.19**)}\\
&\quad+Is\Big(\Vert\nabla\Phi\Vert_{L^2}^2+\Vert\nabla\Phi\Vert_{L^2}^2\Vert\nabla \mathbf{\mathbf{u}}\Vert_{L^3}^2
+\Vert\nabla \mathbf{\mathbf{u}}\Vert_{L^2}^2+\Vert\nabla \mathbf{\mathbf{u}}\Vert_{L^4}^4+\Vert\nabla \mathbb{H}\Vert_{L^4}^4\Big).\notag
\end{align}
Similar to (2.21) in \cite[Lemma 2.2]{HJR}, from (\ref{GN11}) and (\ref{(2.5**)}), we can prove
\begin{align}
\int\vert\nabla \mathbb{H}\vert^2\Phi dx
&\le I\Big(\Vert\nabla \mathbb{H}\Vert_{L^2}^2+\Vert\nabla\Phi\Vert_{L^2}^2+\Vert\nabla \mathbb{H}\Vert_{L^6}^2\Big),\label{(2.21**)H}
\end{align}
further,
\begin{align}\label{H5' estimates}
\mathcal{B}_5
&=\nu\Big(\int s\vert\rot \mathbb{H}\vert^2\Phi\Big)_s-\nu\int\vert\rot \mathbb{H}\vert^2\Phi dx\notag\\
&\quad-\nu\int s\Phi\Big(2(\rot \mathbb{H})(\rot\dot{\mathbb{H}})
-2(\rot \mathbb{H})(\nabla\mathbf{u}^i\times\partial_i\mathbb{H})+\vert\rot \mathbb{H}\vert^2\nabla\cdot \mathbf{\mathbf{u}}\Big)dx\notag\\
&\le \nu\Big(\int s\vert\rot \mathbb{H}\vert^2\Phi\Big)_s
+\alpha_3s\Vert\nabla\dot{\mathbb{H}}\Vert_{L^2}^2+I\Big(\Vert\nabla \mathbb{H}\Vert_{L^2}^2+\Vert\nabla\Phi\Vert_{L^2}^2+\Vert\nabla \mathbb{H}\Vert_{L^6}^2\Big)\\
&\quad+I(\alpha_3)s\Big(\Vert\nabla\Phi\Vert_{L^2}^2(\Vert\nabla \mathbf{u}\Vert_{L^3}^2+\Vert\nabla \mathbb{H}\Vert_{L^3}^2)
+\Vert\nabla \mathbf{\mathbf{u}}\Vert_{L^2}^2+\Vert\nabla \mathbf{u}\Vert_{L^4}^4+\Vert\nabla \mathbb{H}\Vert_{L^4}^4\Big),\notag
\end{align}
for a small enough constant $\alpha_3\in(0,1)$.
Then, substituting $\mathcal{B}_1-\mathcal{B}_5$ into (\ref{(2.18**)}) with $\psi=s$ and $n=1$, we get that
\begin{align}\label{(2.25**)}
&\frac{d}{ds}\Big(s\Vert\nabla\Phi\Vert_{L^2}^2\Big)
+\Big(s\Vert\varrho^\frac{1}{2}\dot{\Phi}\Vert_{L^2}^2\Big)\notag\\
&\quad\le \frac{d}{ds}\int s\Big(\aleph(\nabla\cdot \mathbf{\mathbf{u}})^2+2\iota\vert\mathfrak{D}(\mathbf{u})\vert^2
+\nu\vert\rot \mathbb{H}\vert^2\Big)\Phi dx+\alpha_3s(\Vert\nabla\dot{\mathbf{u}}\Vert_{L^2}^2+\Vert\nabla\dot{\mathbb{H}}\Vert_{L^2}^2)\notag\\
&\qquad+I\Big(\Vert\nabla\Phi\Vert_{L^2}^2+\Vert\nabla \mathbf{u}\Vert_{L^2}^2+\Vert\nabla \mathbb{H}\Vert_{L^2}^2
+\Vert\nabla \mathbf{u}\Vert_{L^6}^2+\Vert\nabla \mathbb{H}\Vert_{L^6}^2\Big)\\
&\qquad+I(\alpha_3)s\Big(\Vert\nabla\Phi\Vert_{L^2}^2(\Vert\nabla \mathbf{u}\Vert_{L^3}^2+\Vert\nabla \mathbb{H}\Vert_{L^3}^2)+\Vert\nabla\Phi\Vert_{L^2}^2+\Vert\nabla \mathbf{u}\Vert_{L^2}^2+\Vert\nabla \mathbf{u}\Vert_{L^4}^4
+\Vert\nabla \mathbb{H}\Vert_{L^4}^4\Big).\notag
\end{align}

Next, combining (\ref{(2.17**)}) with $I^*$(\ref{(2.25**)})($I^*$ is a large enough number), after making $\alpha_3$ small suitably and
integrating the resulting equation over $(0,S)$, using (\ref{(2.5**)}) and (\ref{aaaa}) we have
\begin{align}
&\sup_{0\le s\le S}s\Big(\Vert\nabla\Phi\Vert_{L^2}^2+\Vert\varrho^{\frac{1}{2}}\dot{\mathbf{u}}\Vert_{L^2}^2
+\Vert\dot{\mathbb{H}}\Vert_{L^2}^2\Big)
+\int_0^Ss\Big(\Vert\varrho^\frac{1}{2}\dot{\Phi}\Vert_{L^2}^2+\Vert\nabla\dot{\mathbf{u}}\Vert_{L^2}^2
+\Vert\nabla\dot{\mathbb{H}}\Vert_{L^2}^2\Big)ds\notag\\
&\quad\le I+I\sup_{0\le s\le S}\int s\Big(\vert\nabla \mathbf{u}\vert^2+\vert\nabla \mathbb{H}\vert^2\Big)\Phi dx
+I\int_0^S\Big(\Vert\nabla \mathbf{u}\Vert_{L^6}^2+\Vert\nabla \mathbb{H}\Vert_{L^6}^2\Big)ds\notag\\
&\qquad+I\int_0^Ss\Big(\Vert\nabla\Phi\Vert_{L^2}^2(\Vert\nabla \mathbf{u}\Vert_{L^3}^2+\Vert\nabla \mathbb{H}\Vert_{L^3}^2)
+\Vert\nabla \mathbf{u}\Vert_{L^4}^4+\Vert\nabla \mathbb{H}\Vert_{L^4}^4\Big)ds\notag\\
&\quad\stackrel{\triangle}{=}I+\sum_{i=1}^{3}\mathcal{C}_i.\label{(2.26**)}
\end{align}
We need  to analysis the estimates of $\mathcal{C}_{1}-\mathcal{C}_{3}$. With  (22) in \cite{CFSA} and (\ref{(2.5**)}), we get
\begin{align}\label{(2.28**)}
&(\Vert\nabla\mathbf{u}\Vert_{L^6}+\Vert\nabla\mathbb{H}\Vert_{L^6})\notag\\
&\quad\le I\Big(E_0^{1/6}+\Vert\varrho^{\frac{1}{2}}\dot{\mathbf{u}}\Vert_{L^2}+\Vert\dot{\mathbb{H}}\Vert_{L^2}
+\Vert\nabla\Phi\Vert_{L^2}
+ +\Vert\nabla \mathbf{u} \Vert_{L^2}^3+\Vert\nabla \mathbb{H} \Vert_{L^2}^3 \Big)\notag\\
&\quad\le I\Big(1+\Vert\varrho^{\frac{1}{2}}\dot{\mathbf{u}}\Vert_{L^2}+\Vert\dot{\mathbb{H}}\Vert_{L^2}+\Vert\nabla\Phi\Vert_{L^2}
\Big),
\end{align}
with denoting that
$$\mathcal{F}\stackrel{\triangle}{=}1+\Vert\varrho^{\frac{1}{2}}\dot{\mathbf{u}}\Vert_{L^2}
+\Vert\dot{\mathbb{H}}\Vert_{L^2}
+\Vert\nabla\Phi\Vert_{L^2},$$
then, by using (\ref{(2.5**)}) and (\ref{aaaa}), we obtain
\begin{align}
\mathcal{C}_2
\le I\int_0^S\mathcal{F}^2ds\label{(2.29**)}
\le I.
\end{align}
Similar to (2.30) in \cite[Lemma 2.2]{HJR}, utilizing (\ref{(2.5**)}) and (\ref{(2.28**)}), one has
\begin{align}
\mathcal{C}_1
&\le I\Big(\Vert\nabla \mathbf{u}\Vert_{L^2}^2+\Vert\nabla \mathbb{H}\Vert_{L^2}^2\Big)
+Is\Vert\nabla\Phi\Vert_{L^2}\Big(\Vert\nabla \mathbf{u}\Vert_{L^6}^\frac{1}{2}
+\Vert\nabla \mathbb{H}\Vert_{L^6}^\frac{1}{2}\Big)\notag\\
&\le I+Is\Vert\nabla\Phi\Vert_{L^2}\mathcal{F}^\frac{1}{2}\label{(2.30**)}\\
&\le I+\frac{1}{8}s( \Vert\varrho^{\frac{1}{2}}\dot{\mathbf{u}}\Vert_{L^2}^2+\Vert\dot{\mathbb{H}}\Vert_{L^2}^2
+\Vert\nabla\Phi\Vert_{L^2}^2).\notag
\end{align}

Subsequently, exploiting Young's inequality, (\ref{GN11}), (\ref{(2.5**)}), (\ref{aaaa}) and (\ref{(2.28**)}),
we reach
\begin{align}\label{(2.31**)}
\mathcal{C}_3
&\le I\int_0^Ss\Big(\Vert\nabla\Phi\Vert_{L^2}^2(\Vert\nabla \mathbf{u}\Vert_{L^2}\Vert\nabla \mathbf{u}\Vert_{L^6}
+\Vert\nabla \mathbb{H}\Vert_{L^2}\Vert\nabla \mathbb{H}\Vert_{L^6})\Big)\notag\\
&\quad+I\int_0^Ss\Big(\Vert\nabla \mathbf{u}\Vert_{L^2}\Vert\nabla \mathbf{u}\Vert_{L^6}^3
+\Vert\nabla \mathbb{H}\Vert_{L^2}\Vert\nabla \mathbb{H}\Vert_{L^6}^3\Big)ds\notag\\
&\le I+I\int_0^Ss\mathcal{F}^3ds \notag\\
&\le I+\frac{1}{8}\sup_{0\le s\le S}s\Big(\Vert\nabla\Phi\Vert_{L^2}^2
+\Vert\varrho^{\frac{1}{2}}\dot{\mathbf{u}}\Vert_{L^2}^2+\Vert\dot{\mathbb{H}}\Vert_{L^2}^2\Big).
\end{align}

At last, substituting (\ref{(2.29**)})-(\ref{(2.31**)}) into (\ref{(2.26**)}) and integrating
(\ref{(2.20**)}) over $(0,S)$, considering them together can finish the verification of (\ref{(2.10**)}).

\end{proof}

\begin{Lemma}\label{L(2.3**)}
On the premise of Lemma $3.1$, $\exists$ a constant $I>0$ relying on $S$, such that the following estimate can be obtained:
\begin{align}
&\int_0^S\Big(\Vert\nabla\Phi\Vert_{L^m}^n+\Vert\varrho^\frac{1}{2}\dot{\mathbf{u}}\Vert_{L^m}^n
+\Vert\dot{\mathbb{H}}\Vert_{L^m}^n        +\Vert\Phi-1\Vert_{L^\infty}^n\notag\\
&\quad +\Vert \nabla\cdot \mathbf{u} \Vert_{L^\infty}^n     +\Vert\rot \mathbf{u}\Vert_{L^\infty}^n
+\Vert \nabla\cdot\mathbb{H} \Vert_{L^\infty}^n     +\Vert\rot \mathbb{H}\Vert_{L^\infty}^n\Big)ds\le I(S),\label{(2.32**)}
\end{align}
with the scopes:
\begin{align}
m\in(3,6)\quad and \quad 1<n<\frac{4m}{5m-6}<\frac{4}{3}.\label{(2.33**)}
\end{align}
\end{Lemma}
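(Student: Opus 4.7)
My approach combines the Gagliardo--Nirenberg interpolation of Lemma \ref{Gagliardo-Nirenberg Inequality} with the time-weighted estimates of Lemma \ref{L(2.2**)}, closed off by H\"older's inequality in the time variable. For each $f \in \{\dot{\mathbf{u}}, \dot{\mathbb{H}}, \nabla\Phi\}$, (\ref{GN11}) with $\alpha = (6-m)/(2m)$ gives
$$\Vert f\Vert_{L^m} \le I\Vert f\Vert_{L^2}^{(6-m)/(2m)}\Vert\nabla f\Vert_{L^2}^{(3m-6)/(2m)}.$$
Simultaneously, (\ref{(2.10**)}) provides $\Vert f\Vert_{L^2}^2 \le I s^{-1}$ pointwise in $s$ together with $\int_0^S s\Vert\nabla f\Vert_{L^2}^2 ds \le I$, which combined with the density bound (\ref{(2.4**)}) yields the pointwise-in-$s$ inequality
$$\Vert f\Vert_{L^m}^n \le I s^{-n/2}\bigl(s\Vert\nabla f\Vert_{L^2}^2\bigr)^{n(3m-6)/(4m)}.$$

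The next step is H\"older in $s$ with the conjugate pair $p = 4m/[n(3m-6)]$ and $p' = 4m/[4m - n(3m-6)]$: the second factor above integrates via (\ref{(2.10**)}), and the remaining singular factor $\int_0^S s^{-np'/2}ds$ converges exactly when $np'/2 < 1$; a short algebraic manipulation reduces this to $n(5m-6) < 4m$, i.e.\ the threshold $n < 4m/(5m-6)$ in (\ref{(2.33**)}). The term $\Vert\Phi - 1\Vert_{L^\infty}^n$ is handled analogously via $\Vert\Phi-1\Vert_{L^\infty} \le I\Vert\Phi-1\Vert_{L^6}^{1/2}\Vert\nabla^2\Phi\Vert_{L^2}^{1/2}$ combined with uniform $H^1$-control of $\Phi - 1$ from Lemma \ref{L (2.1**)}; this yields the milder integrability condition $n<2$, automatic from $n<4/3$.

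Finally, the $L^\infty$-in-space estimates on $\nabla\cdot\mathbf{u}$ and $\rot\mathbf{u}$ come from the decomposition (\ref{modified derivatives}): write
$$(2\iota+\aleph)(\nabla\cdot\mathbf{u}) = \mathbb{G} + \mathcal{R}(\varrho\Phi-1) + (-\Delta)^{-1}\nabla\cdot\bigl[(\rot\mathbb{H})\times\mathbb{H}\bigr],$$
and the parallel identity for $\iota\,\rot\mathbf{u}$. For $m>3$, the Sobolev embedding $W^{1,m}(\mathrm{R}^3)\hookrightarrow L^\infty$ reduces $\Vert\mathbb{G}\Vert_{L^\infty} + \Vert\mathscr{N}\Vert_{L^\infty}$ to $\Vert\nabla\mathbb{G}\Vert_{L^m} + \Vert\nabla\mathscr{N}\Vert_{L^m}$ (plus lower-order $L^m$ norms), and this gradient is controlled by (\ref{g,n estimates}) in terms of $\Vert\varrho\dot{\mathbf{u}}\Vert_{L^m}$, already estimated above. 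The remaining lower-order and non-local pieces are handled by the $L^m$ and $L^\infty$ bounds on $\Phi-1$ and $\mathbb{H}$ together with standard Calder\'on--Zygmund estimates; the magnetic analogues follow from $(\ref{MHD})_4$ in the same manner (noting $\nabla\cdot\mathbb{H}\equiv0$). The delicate point will be tracking precisely how the $s^{-1/2}$-type singularities in (\ref{(2.10**)}) interact with the Gagliardo--Nirenberg exponent, as this interaction is what pins down the admissible range $n < 4m/(5m-6)$.
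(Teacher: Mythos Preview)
Your proposal is correct and follows essentially the same strategy as the paper: Gagliardo--Nirenberg interpolation combined with the time-weighted bounds of Lemma~\ref{L(2.2**)}, closed by H\"older in $s$ (which produces exactly the threshold $n<4m/(5m-6)$), then the $\mathbb{G}$--$\mathscr{N}$ decomposition together with the Sobolev embedding $W^{1,2}\cap W^{1,m}\hookrightarrow L^\infty$ for the remaining terms. One small correction: Lemma~\ref{L (2.1**)} does \emph{not} provide uniform-in-time $H^1$ control of $\Phi-1$ (only $\psi^2\Vert\nabla\Phi\Vert_{L^2}^2$ is bounded there), so for $\Vert\Phi-1\Vert_{L^\infty}$ you must instead use the weighted bound $s\Vert\nabla\Phi\Vert_{L^2}^2\le I(S)$ from Lemma~\ref{L(2.2**)}; carrying this through your interpolation gives the condition $n<4/3$ rather than $n<2$, which is still automatic from (\ref{(2.33**)}).
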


\begin{proof}
Because of (\ref{GN11}) and (\ref{(2.4**)}), we get
\begin{align*}
\Vert\dot{\mathbb{H}}\Vert_{L^m}^n\le I\Big(\Vert\dot{\mathbb{H}}\Vert_{L^2}^{\frac{n(6-m)}{2m}}
\Vert\nabla\dot{\mathbb{H}}\Vert_{L^2}^{\frac{n(3m-6)}{2m}}\Big),
\end{align*}
from  which and  (\ref{(2.10**)}), (\ref{(2.33**)}), we find
\begin{align}\label{Hlnm}
&\int_0^S\Vert\dot{\mathbb{H}}\Vert_{L^m}^nds\notag\\
&\quad\le I \sup_{0\le s\le S}  (s\Vert\dot{\mathbb{H}}\Vert_{L^2}^2)^{\frac{n(6-m)}{4m}}
\int_0^S s^{-\frac{n}{2}}(s\Vert\nabla\dot{\mathbb{H}}\Vert_{L^2}^2)^{\frac{n(3m-6)}{4m}}ds\notag\\
&\quad\le I(S)\Big(\int_0^S  s^{-\frac{2mn}{4m-3mn+6n}}\, ds\Big)^{\frac{4m+6n-3mn}{4m}}
\Big(    \int_0^S   \, s\Vert\nabla\dot{\mathbb{H}}\Vert_{L^2}^2 \,   ds\Big)^{\frac{n(3m-6)}{4m}}\notag\\
&\quad\le I(S).
\end{align}
Combine  \eqref{Hlnm} with (2.34) in \cite[Lemma 2.3]{HJR}, we achieve that
\begin{align}
\int_0^S\Big(\Vert\nabla\Phi\Vert_{L^m}^n+\Vert\varrho^\frac{1}{2}\dot{\mathbf{u}}\Vert_{L^m}^n
+\Vert\dot{\mathbb{H}}\Vert_{L^m}^n\Big)ds
\le I(S).\label{(2.34**)}
\end{align}

Utilizing (\ref{GN11}), (\ref{(2.4**)})-(\ref{aaaa}), (\ref{(2.28**)}),$(\ref{MHD})_4$ and Sobolev embedding inequality, we get
\begin{align}
\Vert\nabla\vert \mathbb{H}\vert^2\Vert_{L^m}
&\le I\Big(\Vert \mathbb{H}\Vert_{L^2}^\frac{1}{2}\Vert\nabla \mathbb{H}\Vert_{L^2}^\frac{1}{2}\Vert\nabla \mathbb{H}\Vert_{L^6}\Big)\le I\mathcal{F},\label{nabla H^2' L^m estimtes)}\\
\Vert\nabla^2\mathbb{H}\Vert_{L^m}
&\le I\Big(\Vert\dot{\mathbb{H}}\Vert_{L^m}
+\Vert \mathbb{H}\Vert_{L^2}^\frac{1}{2}\Vert\nabla \mathbb{H}\Vert_{L^2}^\frac{1}{2}\Vert\nabla \mathbf{u}\Vert_{L^6}\Big)\le I\Big(\Vert\dot{\mathbb{H}}\Vert_{L^m}+\mathcal{F}\Big),\label{nabla^2 H' L^m estimtes)}
\end{align}
coupled with $(\ref{MHD})_4$, $(\ref{modified derivatives})$, ($\ref{g,n estimates}$) and the conclusions of \cite[Lemma 2.3]{HJR}, we reach
\begin{align}
&\Vert\Phi-1\Vert_{L^\infty}+\Vert(\nabla\cdot \mathbf{u})\Vert_{L^\infty}
+\Vert\rot \mathbf{u}\Vert_{L^\infty}
+\Vert(\nabla\cdot\mathbb{H})\Vert_{L^\infty}+\Vert\rot \mathbb{H}\Vert_{L^\infty}\notag\\
&\quad\le
I\Big(1+\Vert(\nabla\Phi,\nabla\mathbb{G},\nabla\mathscr{N},\nabla(\vert \mathbb{H}\vert^2),\nabla^2 \mathbb{H})\Vert_{L^2}\notag\Big)\\
&\qquad+I\Big(\Vert(\nabla\Phi,\nabla\mathbb{G},\nabla\mathscr{N},\nabla(\vert \mathbb{H}\vert^2),\nabla^2 \mathbb{H})\Vert_{L^m}\Big)\notag\\
&\quad\le I\Big(\mathcal{F}+\Vert\nabla\Phi\Vert_{L^m}+\Vert\varrho^\frac{1}{2}\dot{\mathbf{u}}\Vert_{L^m}
+\Vert\dot{\mathbb{H}}\Vert_{L^m}\Big).\label{((theta-1,u,H)' estimtes)}
\end{align}

After integrating (\ref{((theta-1,u,H)' estimtes)}) over (0,S), combined with (\ref{(2.5**)}), (\ref{aaaa}) and (\ref{(2.34**)}), we will terminate the whole proof of (\ref{(2.32**)}).

\end{proof}

\begin{Lemma}\label{L(2.5**)}
Suppose that $(\varrho_0,\mathbf{u}_0,\Phi_0,\mathbb{H}_0)$ satisfy $(\ref{(1.17**)})$ and a constant $\zeta$, the following estimate can be obtained for $I_0\le\zeta$:
\begin{align}
&\sup_{0\le s\le S}\Big(\Vert\nabla\varrho\Vert_{L^2\cap L^m}+\Vert\varrho_s\Vert_{L^2}\Big)\notag\\
&\quad+\int_0^S\Big(\Vert\nabla^2\mathbf{u}\Vert_{L^m}^n+\Vert\nabla \mathbf{u}\Vert_{L^\infty}^n+\Vert\nabla^2\mathbb{H}\Vert_{L^m}^n
+\Vert\nabla \mathbb{H}\Vert_{L^\infty}^n\Big)ds
\le I(S),\label{(2.36**)}
\end{align}
where $(m,n)$ conform to $(\ref{(2.33**)})$.
\end{Lemma}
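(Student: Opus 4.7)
The argument is a logarithmic Gr\"onwall loop that couples a transport-type evolution inequality for $\|\nabla\varrho\|_{L^p}$ ($p\in\{2,m\}$) with $L^m$ elliptic estimates for $\nabla^2\mathbf{u}$ and $\nabla^2\mathbb{H}$, closed through the Beale--Kato--Majda inequality (\ref{B-K-M inequality}). Differentiating $(\ref{MHD})_1$ in $x$, testing against $|\nabla\varrho|^{p-2}\nabla\varrho$ and integrating by parts over $\mathrm{R}^3$ yields
\begin{align*}
\tfrac{d}{ds}\|\nabla\varrho\|_{L^p}\le I\|\nabla\mathbf{u}\|_{L^\infty}\|\nabla\varrho\|_{L^p}+I\|\nabla^2\mathbf{u}\|_{L^p},\qquad p\in\{2,m\}.
\end{align*}
Reading $(\ref{MHD})_2$ as the Lam\'e system $\iota\Delta\mathbf{u}+(\iota+\aleph)\nabla(\nabla\cdot\mathbf{u})=\varrho\dot{\mathbf{u}}+\nabla\mathfrak{P}-(\rot\mathbb{H})\times\mathbb{H}$ and applying standard $L^m$ theory gives
\begin{align*}
\|\nabla^2\mathbf{u}\|_{L^m}\le I\bigl(\|\varrho\dot{\mathbf{u}}\|_{L^m}+(1+\|\Phi-1\|_{L^\infty})\|\nabla\varrho\|_{L^m}+\|\nabla\Phi\|_{L^m}+\|\mathbb{H}\|_{L^\infty}\|\nabla\mathbb{H}\|_{L^m}\bigr),
\end{align*}
while using $\nabla\cdot\mathbb{H}=0$ to recast $(\ref{MHD})_4$ as $-\nu\Delta\mathbb{H}=-\dot{\mathbb{H}}+(\mathbb{H}\cdot\nabla)\mathbf{u}-\mathbb{H}(\nabla\cdot\mathbf{u})$ yields the analogous bound for $\|\nabla^2\mathbb{H}\|_{L^m}$.

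\textbf{Closing the loop.} Feeding these elliptic estimates into (\ref{B-K-M inequality}) with $h=m$ and then into the transport estimate produces, with $X(s)=e+\|\nabla\varrho(s)\|_{L^m}$,
\begin{align*}
\tfrac{d}{ds}\log X(s)\le I\,\mathcal{G}(s)\,\log X(s)+I\,\mathcal{R}(s),
\end{align*}
where $\mathcal{G}$ gathers $1+\|\nabla\cdot\mathbf{u}\|_{L^\infty}+\|\rot\mathbf{u}\|_{L^\infty}+\|\Phi-1\|_{L^\infty}$ and $\mathcal{R}$ collects $\|\varrho\dot{\mathbf{u}}\|_{L^m}+\|\dot{\mathbb{H}}\|_{L^m}+\|\nabla\Phi\|_{L^m}$ together with lower--order terms. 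By Lemma \ref{L(2.3**)} and the admissible range (\ref{(2.33**)}), both $\mathcal{G}$ and $\mathcal{R}$ belong to $L^n(0,S)\subset L^1(0,S)$ since $n>1$, so a log--Gr\"onwall inequality yields $\|\nabla\varrho\|_{L^m}\in L^\infty(0,S)$; the $p=2$ version of the transport estimate then returns $\|\nabla\varrho\|_{L^2}\in L^\infty(0,S)$, and $(\ref{MHD})_1$ directly gives $\|\varrho_s\|_{L^2}\le\|\mathbf{u}\|_{L^\infty}\|\nabla\varrho\|_{L^2}+\|\varrho\|_{L^\infty}\|\nabla\cdot\mathbf{u}\|_{L^2}$, which is bounded thanks to Lemmas \ref{L (2.1**)} and \ref{L(2.3**)}.

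\textbf{Final estimates and main obstacle.} Once $\nabla\varrho\in L^\infty_s(L^2\cap L^m)$ is secured, the elliptic bounds above combined with Lemmas \ref{L (2.1**)}, \ref{L(2.2**)} and \ref{L(2.3**)} immediately give $\nabla^2\mathbf{u},\nabla^2\mathbb{H}\in L^n_s(L^m)$, and a second application of (\ref{B-K-M inequality}) returns $\nabla\mathbf{u},\nabla\mathbb{H}\in L^n_s(L^\infty)$, completing (\ref{(2.36**)}). The delicate point is the circularity created by the pressure $\nabla\mathfrak{P}=\mathcal{R}\nabla(\varrho\Phi)$: it forces $\|\nabla^2\mathbf{u}\|_{L^m}$ to contain $\|\nabla\varrho\|_{L^m}$, so after BKM the factor $\log\|\nabla\varrho\|_{L^m}$ re--enters the right--hand side of the transport estimate, and only a \emph{logarithmic} Gr\"onwall can close the loop. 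For the log--coefficient $\mathcal{G}$ to be time--integrable one must use the full strength of (\ref{(2.32**)}), which is exactly where the window $n\in(1,\tfrac{4m}{5m-6})$ in (\ref{(2.33**)}) is indispensable.
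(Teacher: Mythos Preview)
Your proposal is correct and follows essentially the same route as the paper: derive the transport inequality for $\|\nabla\varrho\|_{L^p}$, feed in the Lam\'e $L^m$-elliptic estimate (\ref{(2.38**)}) and its magnetic analogue, apply the BKM inequality (\ref{B-K-M inequality}) to produce a logarithmic Gr\"onwall for $\log(e+\|\nabla\varrho\|_{L^m})$ whose coefficient is integrable by Lemma~\ref{L(2.3**)}, then recover the $L^2$-gradient bound, $\|\varrho_s\|_{L^2}$, and the $L^n_sL^m_x$/$L^n_sL^\infty_x$ integrals in the stated order. The only cosmetic difference is that the paper absorbs your additive term $\mathcal{R}$ into the multiplicative coefficient (using $\log X\ge1$) to write the inequality in the purely multiplicative form $\tfrac{d}{ds}\log X\le I\Psi\log X$.
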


\begin{proof}
First of all, using $(\ref{MHD})_2$, (\ref{(2.4**)}), (\ref{nabla H^2' L^m estimtes)}), (\ref{nabla^2 H' L^m estimtes)}) and  the property of the elliptic operator $-\iota\Delta-(\iota+\aleph)\nabla(\nabla\cdot)$ (cf. \cite{YHHU}), it yields
\begin{align}
\Vert\nabla^2 \mathbf{u}\Vert_{L^m}
&\le I\Big(\Vert\varrho^\frac{1}{2}\dot{\mathbf{u}}\Vert_{L^m}+\Vert\nabla \mathfrak{P}(\varrho,\Phi)\Vert_{L^m}
+\Vert \mathbb{H}\cdot\nabla \mathbb{H}\Vert_{L^m}\Big)\notag\\
&\le I\Big(\Vert\varrho^\frac{1}{2}\dot{\mathbf{u}}\Vert_{L^m}+\Vert\nabla\Phi\Vert_{L^m}
+\Vert\Phi\Vert_{L^\infty}\Vert\nabla\varrho\Vert_{L^m}+\mathcal{F}^2\Big),\label{(2.38**)}\\
\Vert\nabla^2 \mathbb{H}\Vert_{L^m}&\le I\Big(\Vert\dot{\mathbb{H}}\Vert_{L^m}+\mathcal{F}^2\Big).\label{(2.38**)H}
\end{align}
for $m\in(3,6)$. Then, by (\ref{B-K-M inequality}), it yields
\begin{align}\label{(2.40**)}
\Vert\nabla \mathbf{u}\Vert_{L^\infty}&\le I\Big(\Vert\nabla \mathbf{u}\Vert_{L^2}+1\Big)+I\Big(\Vert(\nabla\cdot \mathbf{u})\Vert_{L^\infty}
+\Vert\rot \mathbf{u}\Vert_{L^\infty}\Big)\log\Big(e+\Vert\nabla\varrho\Vert_{L^m}\Big)\notag\\
&\quad+I\Big(\Vert(\nabla\cdot \mathbf{u})\Vert_{L^\infty}+\Vert\rot \mathbf{u}\Vert_{L^\infty}\Big)\times\notag\\
&\quad\log\Big(e+\Vert\varrho^\frac{1}{2}\dot{\mathbf{u}}\Vert_{L^m}+\Vert\nabla\Phi\Vert_{L^m}
+\Vert\Phi\Vert_{L^\infty}+\mathcal{F}^2\Big),
\end{align}
and
\begin{align}
\Vert\nabla \mathbb{H}\Vert_{L^\infty}&\le I\Big(\Vert\nabla \mathbb{H}\Vert_{L^2}+1\Big)\notag\\
&\quad+I\Big(\Vert(\nabla\cdot\mathbb{H})\Vert_{L^\infty}
+\Vert\rot \mathbb{H}\Vert_{L^\infty}\Big)\log\Big(e+\Vert\dot{\mathbb{H}}\Vert_{L^m}+\mathcal{F}^2\Big).\label{(2.40**)H}
\end{align}

Secondly, combining (2.37) in \cite[Lemma 2.5]{HJR} with (\ref{(2.38**)}), (\ref{(2.40**)}) and (\ref{(2.38**)}), one gets
\begin{align}
\frac{d}{ds}\log\Big(e+\Vert\nabla\varrho\Vert_{L^m}\Big)\le I\Psi(S)\log\Big(e+\Vert\nabla\varrho\Vert_{L^m}\Big),\label{(2.41**)}
\end{align}
where
\begin{align}
\Psi(S)&= I\Big(\Vert(\nabla\cdot \mathbf{u})\Vert_{L^\infty}+\Vert\rot \mathbf{u}\Vert_{L^\infty}\Big)
\log\Big(e+\Vert\varrho^\frac{1}{2}\dot{\mathbf{u}}\Vert_{L^m}+\Vert\nabla\Phi\Vert_{L^m}
+\Vert\Phi\Vert_{L^\infty}+\mathcal{F}^2\Big)\notag\\
&\quad+I\Big(\Vert\Phi\Vert_{L^\infty}+\Vert\varrho^\frac{1}{2}\dot{\mathbf{u}}\Vert_{L^m}+\Vert\nabla\Phi\Vert_{L^m}
+\mathcal{F}^2\Big).
\end{align}
Meanwhile, due to (\ref{(2.32**)}), it's easy to prove that
\begin{align*}
\int_0^S\Psi(S)ds\le I(S),
\end{align*}
which is combined with (\ref{(2.41**)}) yields
\begin{align}
\sup_{0\le s\le S}\Vert\nabla\varrho\Vert_{L^m}\le I(S)\quad m\in (3,6).\label{(2.42**)}
\end{align}

After following by (\ref{(2.38**)})-(\ref{(2.40**)H}), (\ref{(2.42**)}) and  Sobolev embedding inequality for $(m,n)$ as in (\ref{(2.33**)}), we have
\begin{align}
\int_0^S\Big(\Vert\nabla^2\mathbf{u}\Vert_{L^m}^n+\Vert\nabla \mathbf{u}\Vert_{L^\infty}^n+\Vert\nabla^2\mathbb{H}\Vert_{L^m}^n
+\Vert\nabla \mathbb{H}\Vert_{L^\infty}^n\Big)ds\le I(S).\label{(2.43**)}
\end{align}
Besides,   (2.37) in \cite[Lemma 2.5]{HJR} with $p=2$, and $\rm Gr\ddot{o}nwall$'s inequality  also deduce
\begin{align}
\sup_{0\le s\le S}\Vert\nabla\varrho\Vert_{L^2}\le I(S).\label{(2.44**)}
\end{align}

Finally, taking advantage of $(\ref{MHD})_1$ (\ref{GN11}), (\ref{(2.4**)}), (\ref{(2.5**)}) and (\ref{(2.42**)}), one reaches
\begin{align}
\Vert\varrho_s(s)\Vert_{L^2}\le I(S), \quad \forall s\in[0,S],
\end{align}
which is combined with (\ref{(2.42**)})-(\ref{(2.44**)}), we finish the proof of (\ref{(2.36**)}).

\end{proof}

\begin{Lemma}\label{L(2.6**)}
Suppose that $0< \varrho_1\le \inf_{x\in\mathrm{R}^3}\varrho_0(s)$, $\exists$ a constant $I(\varrho_1,S)$ relying on $\varrho_1$ and $S$, such that $\varrho(x,s)$ satisfies
\begin{align}
\varrho(x,s)\ge I(\varrho_1,S), \quad \forall (x,s)\in(\mathrm{R}^3;[0,S]),\label{(2.45**)}
\end{align}
and the following estimate can be obtained:
\begin{align}
&\sup_{0\le s\le S}s\Big(\Vert\nabla^2 \mathbf{u}\Vert_{L^2}^2+\Vert\nabla^2 \mathbb{H}\Vert_{L^2}^2
+\Vert \mathbf{u}_s\Vert_{L^2}^2+\Vert \mathbb{H}_s\Vert_{L^2}^2\Big)\notag\\
&\qquad+\int_0^S\Big(\Vert\nabla^2 \mathbf{u}\Vert_{L^2}^2+\Vert\nabla^2 \mathbb{H}\Vert_{L^2}^2
+\Vert \mathbf{u}_s\Vert_{L^2}^2+\Vert \mathbb{H}_s\Vert_{L^2}^2
+s(\Vert\nabla \mathbf{u}_s\Vert_{L^2}^2+\Vert\nabla \mathbb{H}_s\Vert_{L^2}^2)\Big)ds\notag\\
&\quad\le I(S).\label{(2.46**)}
\end{align}

\end{Lemma}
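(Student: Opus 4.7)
The plan is to establish the bounds in three stages, leveraging the material-derivative estimates of Lemmas 3.1--3.4, converting them into spatial $H^2$ and time-partial-derivative estimates via elliptic regularity and algebraic identities.

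For the density lower bound, I would integrate the continuity equation $\varrho_s + \mathbf{u}\cdot\nabla\varrho + \varrho(\nabla\cdot\mathbf{u}) = 0$ along the flow map $X(s;x)$ of $\mathbf{u}$ to obtain the Lagrangian formula $\varrho(X(s),s) = \varrho_0(X(0))\exp\bigl(-\int_0^s (\nabla\cdot\mathbf{u})(X(\tau),\tau)\,d\tau\bigr)$. It then suffices to bound $\int_0^S\|\nabla\cdot\mathbf{u}\|_{L^\infty}\,d\tau$, which follows from (\ref{(2.32**)}) by H\"older's inequality in time (using $n>1$), yielding $\varrho(x,s) \ge \varrho_1 e^{-I(S)}$.

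For the $H^2$ spatial bounds, I would apply standard elliptic regularity for the Lam\'e operator $-\iota\Delta-(\iota+\aleph)\nabla(\nabla\cdot)$ to $(\ref{MHD})_2$ at level $p=2$, giving $\|\nabla^2\mathbf{u}\|_{L^2} \le I\bigl(\|\varrho\dot{\mathbf{u}}\|_{L^2} + \|\nabla\mathfrak{P}\|_{L^2} + \|\mathbb{H}\cdot\nabla\mathbb{H}\|_{L^2}\bigr)$, and control $\|\nabla\mathfrak{P}\|_{L^2} \le I(\|\Phi\|_{L^\infty}\|\nabla\varrho\|_{L^2} + \|\nabla\Phi\|_{L^2})$ using the already-proven $\|\nabla\varrho\|_{L^2} \le I(S)$ from (\ref{(2.36**)}) and $\|\Phi\|_{L^\infty}$ bound from (\ref{(2.32**)}), while $\|\mathbb{H}\cdot\nabla\mathbb{H}\|_{L^2}$ is handled by Gagliardo--Nirenberg against $\|\nabla\mathbb{H}\|_{L^6}$ via (\ref{(2.28**)}). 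Analogously, applying the heat-type regularity to $(\ref{MHD})_4$ yields $\|\nabla^2\mathbb{H}\|_{L^2} \le I\bigl(\|\mathbb{H}_s\|_{L^2} + \|\mathbf{u}\cdot\nabla\mathbb{H}\|_{L^2} + \|\mathbb{H}\cdot\nabla\mathbf{u}\|_{L^2} + \|\mathbb{H}(\nabla\cdot\mathbf{u})\|_{L^2}\bigr)$; after rewriting $\mathbb{H}_s = \dot{\mathbb{H}} - \mathbf{u}\cdot\nabla\mathbb{H}$ this reduces to terms controlled by Lemmas 3.1--3.4.

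For the time-partial-derivative estimates, the identities $\mathbf{u}_s = \dot{\mathbf{u}} - \mathbf{u}\cdot\nabla\mathbf{u}$ and $\mathbb{H}_s = \dot{\mathbb{H}} - \mathbf{u}\cdot\nabla\mathbb{H}$, together with Gagliardo--Nirenberg and $\|\mathbf{u}\|_{L^6}\le I\|\nabla\mathbf{u}\|_{L^2}$, give $\|\mathbf{u}_s\|_{L^2}^2 + \|\mathbb{H}_s\|_{L^2}^2 \le I\bigl(\|\varrho^{1/2}\dot{\mathbf{u}}\|_{L^2}^2 + \|\dot{\mathbb{H}}\|_{L^2}^2 + \|\nabla\mathbf{u}\|_{L^2}^2\|\nabla\mathbf{u}\|_{L^3}^2 + \|\nabla\mathbf{u}\|_{L^2}^2\|\nabla\mathbb{H}\|_{L^3}^2\bigr)$, and each term integrates over $[0,S]$ by the already-established bounds. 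Multiplying by the $s$-weight and differentiating $\mathbf{u}_s = \dot{\mathbf{u}} - \mathbf{u}\cdot\nabla\mathbf{u}$ spatially gives $\nabla\mathbf{u}_s = \nabla\dot{\mathbf{u}} - \nabla\mathbf{u}\cdot\nabla\mathbf{u} - \mathbf{u}\cdot\nabla^2\mathbf{u}$, hence $s\|\nabla\mathbf{u}_s\|_{L^2}^2 \le I\bigl(s\|\nabla\dot{\mathbf{u}}\|_{L^2}^2 + s\|\nabla\mathbf{u}\|_{L^4}^4 + s\|\mathbf{u}\|_{L^6}^2\|\nabla^2\mathbf{u}\|_{L^3}^2\bigr)$, which integrates to $I(S)$ through (\ref{(2.10**)}) and the $L^n_sL^m_x$ bounds on $\nabla^2\mathbf{u}$ from Lemma 3.4, and similarly for $\mathbb{H}_s$. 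The sup-in-$s$ part is obtained by combining the endpoint case $s\|\varrho^{1/2}\dot{\mathbf{u}}\|_{L^2}^2 + s\|\dot{\mathbb{H}}\|_{L^2}^2 \le I(S)$ from (\ref{(2.10**)}) with the spatial $H^2$ bound just derived.

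The main obstacle will be handling the Lorentz force and the $\mathbb{H}\cdot\nabla\mathbf{u}$ coupling under the low regularity $\mathbb{H}_0\in H^1\cap W^{1,3}$, in particular controlling $\|\mathbb{H}\cdot\nabla\mathbb{H}\|_{L^2}$ uniformly when estimating $\|\nabla^2\mathbf{u}\|_{L^2}$, which is why the intermediate bounds on $\|\nabla\mathbb{H}\|_{L^6}$ from (\ref{(2.28**)}) and the quantity $\mathcal{F}$ play a central role; the argument must carefully propagate powers of $\mathcal{F}$ so that each time integral remains finite under the exponent range (\ref{(2.33**)}).
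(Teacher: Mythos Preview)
Your proposal follows the same architecture as the paper's proof: Lagrangian representation for the density lower bound via $(\ref{(2.32**)})$, elliptic regularity for the Lam\'e and Laplace operators to get $\|\nabla^2\mathbf{u}\|_{L^2}$, $\|\nabla^2\mathbb{H}\|_{L^2}$, and the identity $\mathscr{L}_s=\dot{\mathscr{L}}-\mathbf{u}\cdot\nabla\mathscr{L}$ for the time-derivative bounds. The overall strategy is correct.

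There is one imprecision worth fixing. You propose to control $\|\nabla\mathfrak{P}\|_{L^2}\le I(\|\Phi\|_{L^\infty}\|\nabla\varrho\|_{L^2}+\|\nabla\Phi\|_{L^2})$ by invoking the ``$\|\Phi\|_{L^\infty}$ bound from $(\ref{(2.32**)})$''. But $(\ref{(2.32**)})$ only yields $\|\Phi-1\|_{L^\infty}\in L^n(0,S)$ with $n<4/3$, not a pointwise-in-time bound; this is not enough for the supremum estimate $\sup_s s\|\nabla^2\mathbf{u}\|_{L^2}^2$, nor even for $\int_0^S\|\Phi\|_{L^\infty}^2\,ds$. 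The paper avoids this by estimating
\[
\|\Phi\nabla\varrho\|_{L^2}\le \|\nabla\varrho\|_{L^2}+\|\Phi-1\|_{L^6}\|\nabla\varrho\|_{L^3}\le I\bigl(1+\|\nabla\Phi\|_{L^2}\bigr),
\]
using $(\ref{(2.36**)})$ for $\|\nabla\varrho\|_{L^2\cap L^m}$ (hence $L^3$ by interpolation), and then applying $(\ref{(2.10**)})$ for $\sup_s s\|\nabla\Phi\|_{L^2}^2$. With this correction your elliptic step matches the paper's bound $\|\nabla^2\mathbf{u}\|_{L^2}+\|\nabla^2\mathbb{H}\|_{L^2}\le I(1+\|\dot{\mathbf{u}}\|_{L^2}+\|\dot{\mathbb{H}}\|_{L^2}+\|\nabla\Phi\|_{L^2})$.

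A second, smaller point: for $\int_0^S s\|\nabla\mathbf{u}_s\|_{L^2}^2\,ds$ you route the convective term through $\|\mathbf{u}\|_{L^6}\|\nabla^2\mathbf{u}\|_{L^3}$ and appeal to the $L^n_sL^m_x$ bounds of Lemma~\ref{L(2.5**)}. This can be made to work, but the paper's route is simpler and self-contained: it uses $\|\nabla(\mathbf{u}\cdot\nabla\mathbf{u})\|_{L^2}^2\le I\|\nabla\mathbf{u}\|_{L^2}\|\nabla^2\mathbf{u}\|_{L^2}^3$ (via $\|\mathbf{u}\|_{L^\infty}\le I\|\nabla\mathbf{u}\|_{L^2}^{1/2}\|\nabla^2\mathbf{u}\|_{L^2}^{1/2}$ and $\|\nabla\mathbf{u}\|_{L^4}^4\le I\|\nabla\mathbf{u}\|_{L^2}\|\nabla^2\mathbf{u}\|_{L^2}^3$), and then closes directly with the just-established $(\ref{(2.47**)})$ rather than the weaker $L^n_sL^m_x$ control.
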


\begin{proof}
Similar to (2.45) in \cite[Lemma 2.6]{HJR}, from $(\ref{MHD})_1$ and (\ref{(2.32**)}), we can prove (\ref{(2.45**)}) instantly.
Then, we consider about (\ref{(2.46**)}). Using the following fact
\begin{align}
(\partial_j\mathfrak{P})_s+\partial_k(\mathbf{u}^k\partial_j\mathfrak{P})
=\partial_j(\mathcal{R}\varrho\Phi)-\partial_k(\mathfrak{P}\partial^k),
\end{align}
which is considered with $(\ref{MHD})_2$, $(\ref{MHD})_4$, (\ref{GN11}) and (\ref{(2.36**)}), we get
\begin{align}
&\Big(\Vert\nabla^2 \mathbf{u}\Vert_{L^2}+\Vert\nabla^2 \mathbb{H}\Vert_{L^2}\Big)\notag\\
&\quad\le I\Big(1+\Vert\dot{\mathbf{u}}\Vert_{L^2}+\Vert\nabla\Phi\Vert_{L^2}
+\Vert \mathbb{H}\cdot\nabla \mathbb{H}\Vert_{L^2}+\Vert\dot{\mathbb{H}}\Vert_{L^2}+\Vert \mathbb{H}\cdot\nabla \mathbf{u}\Vert_{L^2}\Big)\notag\\
&\quad\le I\Big(1+\Vert\dot{\mathbf{u}}\Vert_{L^2}+\Vert\dot{\mathbb{H}}\Vert_{L^2}
+\Vert\nabla\Phi\Vert_{L^2}\Big),\label{(nabla^2 u,H' estimates}
\end{align}
Coupled with (\ref{(2.5**)}), (\ref{aaaa}) (\ref{(2.10**)}) and (\ref{(2.45**)}),
\begin{align}
&\sup_{0\le s\le S}s\Big(\Vert\nabla^2 \mathbf{u}(s)\Vert_{L^2}^2+\Vert\nabla^2 \mathbb{H}(s)\Vert_{L^2}^2\Big)
+\int_0^S\Big(\Vert\nabla^2 \mathbf{u}\Vert_{L^2}^2+\Vert\nabla^2 \mathbb{H}\Vert_{L^2}^2\Big)ds\le I(S).\label{(2.47**)}
\end{align}

In addition, from (\ref{GN11}) and the definition of $(\ref{modified derivatives})_1$:
$ \mathscr{L}_s\stackrel{\triangle}{=} \dot{\mathscr{L}}-\mathbf{u}\cdot\nabla \mathscr{L}$, it has
\begin{align}
\Vert\nabla \mathbf{u}_s\Vert_{L^2}^2
&\le I(\Vert\nabla \dot{\mathbf{u}}\Vert_{L^2}^2+\Vert\mathbf{u}\cdot\nabla\mathbf{u}\Vert_{L^2}^2)\notag\\
&\le I(\Vert\nabla \dot{\mathbf{u}}\Vert_{L^2}^2+\Vert\mathbf{u}\Vert_{L^2}
\Vert\nabla\mathbf{u}\Vert_{L^2}\Vert\nabla^2 \mathbf{u}\Vert_{L^2}^2),
\end{align}

after taking advantage of (\ref{(2.5**)}), (\ref{aaaa}), (\ref{(2.10**)}) and (\ref{(2.47**)}), it obtains
\begin{align}
&\sup_{0\le s\le S}s\Big(\Vert \mathbf{u}_s\Vert_{L^2}^2+\Vert \mathbb{H}_s\Vert_{L^2}^2\Big)
+\int_0^S\Big(\Vert \mathbf{u}_s\Vert_{L^2}^2+\Vert \mathbb{H}_s\Vert_{L^2}^2\Big)ds\le I(S).\label{(2.48**)}
\end{align}

In the end, using (\ref{GN11}), (\ref{(2.5**)}), (\ref{(2.10**)}) and (\ref{(2.47**)}), it yields that
\begin{align}\label{(2.49**)}
&\int_0^Ss\Big(\Vert\nabla \mathbf{u}_s\Vert_{L^2}^2+\Vert\nabla \mathbb{H}_s\Vert_{L^2}^2\Big)ds\notag\\
&\quad\le I\int_0^Ss(\Vert\nabla\dot{\mathbf{u}}\Vert_{L^2}^2
+\Vert\nabla(\mathbf{u}\cdot\nabla\mathbf{u})\Vert_{L^2}^2
+\Vert\nabla\dot{\mathbb{H}}\Vert_{L^2}^2
+\Vert\nabla(\mathbf{u}\cdot\nabla\mathbb{H})\Vert_{L^2}^2)ds \notag\\
&\quad\le I+I\int_0^Ss(\Vert\nabla\mathbf{u}\Vert_{L^2}\Vert\nabla^2\mathbf{u}\Vert_{L^2}^3
+\Vert\nabla\mathbf{u}\Vert_{L^2}\Vert\nabla^2\mathbf{u}\Vert_{L^2}\Vert\nabla^2\mathbb{H}\Vert_{L^2}^2) ds\notag\\
&\quad\le I.
\end{align}
Putting (\ref{(2.47**)})-(\ref{(2.49**)}) together, we can complete the whole verification of (\ref{(2.46**)}).

\end{proof}

\begin{Lemma}\label{Lbbb}
On the premise of Lemma $\ref{Main result}$, the following estimate can be obtained:
\begin{align}
&\sup_{0\le s\le S}\Big(\Vert\nabla\Phi\Vert_{L^2}^2+\Vert\nabla \mathbf{u}\Vert_{L^3}^3
+\Vert\nabla \mathbb{H}\Vert_{L^3}^3\Big)\notag\\
&\quad+\int_0^S\Big(\Vert\dot{\Phi}\Vert_{L^2}^2+\Vert\nabla^2\Phi\Vert_{L^2}^2
+\Vert\Phi_s\Vert_{L^2}^2\Big)ds\le I(S).\label{(2.50**)}
\end{align}

\end{Lemma}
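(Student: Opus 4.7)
The plan is to upgrade the kinetic energy type bounds of Lemmas \ref{L (2.1**)}--\ref{L(2.6**)} to the $L^3$ gradient estimates on $\mathbf{u},\mathbb{H}$ and the $L^2$ gradient estimate on $\Phi$ appearing in $(\ref{(2.50**)})$ by running a Gr\"onwall argument on $[0,S]$ built from the ``$\diverg$--$\rot$" decomposition sketched in the introduction. Concretely, starting from $(\ref{(1.21**)u})$ and $(\ref{(1.21**)H})$, multiply the former by $\vert\nabla\cdot\mathbf{u}\vert(\nabla\cdot\mathbf{u})$ and the latter by $\vert\nabla\cdot\mathbb{H}\vert(\nabla\cdot\mathbb{H})$, sum, and integrate by parts over $\mathrm{R}^3$. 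The Laplacian terms yield the dissipations $(2\iota+\aleph)\int\vert\nabla\cdot\mathbf{u}\vert\vert\nabla(\nabla\cdot\mathbf{u})\vert^2dx$ and $\nu\int\vert\nabla\cdot\mathbb{H}\vert\vert\nabla(\nabla\cdot\mathbb{H})\vert^2dx$, and the pressure plus magnetic-pressure contributions $-\int\Delta[\mathcal{R}\varrho\Phi+\tfrac12\vert\mathbb{H}\vert^2]\vert\nabla\cdot\mathbf{u}\vert(\nabla\cdot\mathbf{u})dx$ are treated by moving one derivative off via integration by parts and invoking $(\ref{(1.27**)})$ for the vector-product terms. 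Repeating the argument with the rot of $(\ref{MHD})_2,(\ref{MHD})_4$ tested against $\vert\rot\mathbf{u}\vert(\rot\mathbf{u})$ and $\vert\rot\mathbb{H}\vert(\rot\mathbb{H})$ controls the vorticity norms in $L^3$, and then the Helmholtz decomposition together with $(\ref{(2.25*)})$, $(\ref{(2.42**)})$ and $(\ref{(2.44**)})$ converts this into $\sup_s(\Vert\nabla\mathbf{u}\Vert_{L^3}^3+\Vert\nabla\mathbb{H}\Vert_{L^3}^3)\le I(S)$.

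For the temperature part I would multiply $(\ref{MHD})_3$ by $\dot\Phi$ and integrate, using the elementary identity $\mathbb{C}_v\varrho\vert\dot\Phi\vert^2+\tfrac{\kappa}{2}\tfrac{d}{ds}\Vert\nabla\Phi\Vert_{L^2}^2=-\kappa\int\nabla\Phi\cdot\nabla(\mathbf{u}\cdot\nabla\Phi)dx+\int(2\iota\vert\mathfrak{D}(\mathbf{u})\vert^2+\aleph(\nabla\cdot\mathbf{u})^2+\nu\vert\rot\mathbb{H}\vert^2-\mathfrak{P}\,\nabla\cdot\mathbf{u})\dot\Phi dx$. The transport term is bounded by $\Vert\nabla\mathbf{u}\Vert_{L^3}\Vert\nabla\Phi\Vert_{L^2}^{1/2}\Vert\nabla^2\Phi\Vert_{L^2}^{3/2}$ through Gagliardo--Nirenberg and the $L^3$ bound just obtained, while the viscous dissipation and pressure terms are controlled by H\"older and Young after invoking the elliptic estimate $(\ref{(2.20**)})$ which gives $\Vert\nabla^2\Phi\Vert_{L^2}\lesssim\Vert\varrho^{1/2}\dot\Phi\Vert_{L^2}+\Vert\nabla\Phi\Vert_{L^2}\Vert\nabla\mathbf{u}\Vert_{L^3}+\Vert\nabla\mathbf{u}\Vert_{L^2}+\Vert\nabla\mathbf{u}\Vert_{L^4}^2+\Vert\nabla\mathbb{H}\Vert_{L^4}^2$. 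After choosing the small-parameter absorptions to balance the $\Vert\varrho^{1/2}\dot\Phi\Vert_{L^2}^2$ and $\Vert\nabla^2\Phi\Vert_{L^2}^2$ contributions, the $\Phi_s$ bound follows from $\Phi_s=\dot\Phi-\mathbf{u}\cdot\nabla\Phi$ together with Sobolev embedding and the previously obtained $\nabla\mathbf{u}\in L^n(L^3)$.

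The main difficulty is the nonlinear magnetic coupling: the terms $\partial_j\mathbb{H}^i\partial_i\mathbb{H}^j$ and $\tfrac12\Delta\vert\mathbb{H}\vert^2$ in $(\ref{(1.21**)u})$, and $\nabla\cdot(\mathbb{H}\cdot\nabla\mathbf{u}-\mathbf{u}\cdot\nabla\mathbb{H}-\mathbb{H}(\nabla\cdot\mathbf{u}))$ in $(\ref{(1.21**)H})$, generate cubic expressions in $\nabla\mathbf{u},\nabla\mathbb{H}$ and quartic ones via the coupling with $\mathbb{H}$; similarly, the Joule heating $\nu\vert\rot\mathbb{H}\vert^2\dot\Phi$ in the temperature equation must be handled without access to $\mathbb{H}_0\in H^2$. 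I expect to treat these by interpolating $\Vert\mathbb{H}\Vert_{L^\infty}\lesssim\Vert\nabla\mathbb{H}\Vert_{L^2}^{1/2}\Vert\nabla^2\mathbb{H}\Vert_{L^2}^{1/2}$, applying $(\ref{(1.27**)})$ to convert pointwise cross products into boundary-free integrals, and absorbing the top-order parts into the dissipations $\int\vert\nabla\cdot\mathbf{u}\vert\vert\nabla(\nabla\cdot\mathbf{u})\vert^2dx$, $\int\vert\rot\mathbb{H}\vert\vert\nabla(\rot\mathbb{H})\vert^2dx$ with the help of $(\ref{(2.38**)})$--$(\ref{(2.38**)H})$ and the time-integrability estimates $(\ref{(2.32**)})$, $(\ref{(2.36**)})$, $(\ref{(2.46**)})$. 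The resulting differential inequality has the form $\tfrac{d}{ds}\bigl(\Vert\nabla\Phi\Vert_{L^2}^2+\Vert\nabla\mathbf{u}\Vert_{L^3}^3+\Vert\nabla\mathbb{H}\Vert_{L^3}^3\bigr)+\mathcal{D}(s)\le\Psi(s)\bigl(1+\Vert\nabla\Phi\Vert_{L^2}^2+\Vert\nabla\mathbf{u}\Vert_{L^3}^3+\Vert\nabla\mathbb{H}\Vert_{L^3}^3\bigr)$ with $\int_0^S\Psi(s)ds\le I(S)$ guaranteed by the preceding lemmas, and Gr\"onwall closes $(\ref{(2.50**)})$.
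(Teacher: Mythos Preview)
Your strategy matches the paper's: test $\nabla\cdot$ and $\rot$ of $(\ref{MHD})_2$, $(\ref{MHD})_4$ against $\vert\nabla\cdot\mathbf{u}\vert(\nabla\cdot\mathbf{u})$, $\vert\rot\mathbf{u}\vert(\rot\mathbf{u})$ and their magnetic analogues, combine with $(\ref{MHD})_3$ tested against $\dot\Phi$, and run a coupled Gr\"onwall argument exactly as in your final paragraph. One point needs tightening, however. Your first paragraph asserts that the $L^3$ bounds on $\nabla\mathbf{u},\nabla\mathbb{H}$ follow from the div--curl identities together with $(\ref{(2.25*)})$, $(\ref{(2.42**)})$, $(\ref{(2.44**)})$ alone, \emph{before} the temperature estimate. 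This does not close: after one integration by parts the pressure contribution $\int\nabla(\mathcal{R}\varrho\Phi)\cdot\nabla(\vert\nabla\cdot\mathbf{u}\vert(\nabla\cdot\mathbf{u}))\,dx$ generates a term of size $I\Vert\nabla^2\Phi\Vert_{L^2}^2$ on the right (this is exactly the first term on the right of the paper's $(\ref{(2.62**)})$), and $\int_0^S\Vert\nabla^2\Phi\Vert_{L^2}^2\,ds$ is \emph{not} available from Lemmas~\ref{L (2.1**)}--\ref{L(2.6**)}; Lemma~\ref{L(2.2**)} only supplies the weighted bound $\int_0^S s\Vert\nabla^2\Phi\Vert_{L^2}^2\,ds\le I(S)$. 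Likewise $(\ref{(2.25*)})$ with $m=3$ still contains $\Vert\varrho\Phi-1\Vert_{L^3}$, which feeds back $\Vert\nabla\Phi\Vert_{L^2}$. The only way to absorb the $\Vert\nabla^2\Phi\Vert_{L^2}^2$ term is to add a sufficiently large multiple of the temperature inequality $(\ref{(2.71**)})$, which carries $\Vert\nabla^2\Phi\Vert_{L^2}^2$ on its left side; this is how the paper passes from $(\ref{(2.62**)})+(\ref{(2.66**)})+(\ref{(2.71**)})$ to $(\ref{(2.72**)})$. So the coupling you describe at the end is structurally required, not a refinement.

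A second technical point worth recording: the convective term $-\int\partial_j(\varrho\mathbf{u}^i)\partial_i\mathbf{u}^j\vert\nabla\cdot\mathbf{u}\vert(\nabla\cdot\mathbf{u})\,dx$ (and its curl analogue) is handled in the paper via the interpolation $(\ref{(2.73**)})$, namely $\Vert\nabla\mathbf{u}\Vert_{L^{9m/(4m-9)}}^3\le I(1+\Vert\nabla\mathbf{u}\Vert_{H^1}^2)(1+\Vert\nabla\mathbf{u}\Vert_{L^3}^2)$ for $\tfrac92\le m<6$, after pulling out $\Vert\nabla\varrho\Vert_{L^m}$ from $(\ref{(2.42**)})$. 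This is precisely where the lower restriction $m\ge\tfrac92$ in $(\ref{(1.17**)})$ enters the argument, and it is what converts the cubic velocity term into the Gr\"onwall-friendly product form on the right of $(\ref{(2.72**)})$; you should make this step explicit rather than leaving it inside ``H\"older and Young''.
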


\begin{proof}
Initially, we introduce some serviceable conclusions from \cite[Lemma 2.7]{HJR}:
\begin{align}\label{(2.58**)}
\Vert\mathscr{L}\Vert_{L^6}^2\le I\Vert\mathscr{L}\Vert_{L^3}^\frac{1}{2}
\Vert\vert\mathscr{L}\vert^\frac{1}{2}\nabla\mathscr{L}\Vert_{L^2}.
\end{align}
and
\begin{align}\label{(2.73**)}
\Vert\nabla \mathbf{u}\Vert_{L^{\frac{9m}{4m-9}}}^3\le I\Big(1+\Vert\nabla \mathbf{u}\Vert_{H^1}^2\Big)
\Big(1+\Vert\nabla \mathbf{u}\Vert_{L^3}^2\Big), \quad \frac{9}{2}\le m<6.
\end{align}

Second step, multiplying $\nabla\cdot(\ref{MHD})_2$ by $\vert(\nabla\cdot \mathbf{u})\vert(\nabla\cdot \mathbf{u})$ and $\nabla\cdot(\ref{MHD})_4$ by $\vert(\nabla\cdot\mathbb{H})\vert(\nabla\cdot\mathbb{H})$, adding up and integrating the result over $\mathrm{R}^3$, it yields
\begin{align}
&\frac{1}{3}\frac{d}{ds}\int\Big(\varrho\vert(\nabla\cdot \mathbf{u})\vert^3+\vert(\nabla\cdot\mathbb{H})\vert^3\Big)dx+(2\iota+\aleph)\int\Big(\vert(\nabla\cdot \mathbf{u})\vert(\vert\nabla\vert(\nabla\cdot \mathbf{u})\vert\vert^2
+\vert\nabla(\nabla\cdot \mathbf{u})\vert^2)\Big)dx\notag\\
&\qquad+\nu\int\Big(\vert(\nabla\cdot\mathbb{H})\vert(\vert\nabla\vert(\nabla\cdot\mathbb{H})\vert\vert^2
+\vert\nabla(\nabla\cdot\mathbb{H})\vert^2)\Big)dx\notag\\
&\quad=-\int(\mathbf{u}_s\cdot\nabla\varrho)(\vert(\nabla\cdot \mathbf{u})\vert(\nabla\cdot \mathbf{u}))dx
+\int\nabla \mathfrak{P}(\varrho,\Phi)\cdot\nabla(\vert(\nabla\cdot \mathbf{u})\vert(\nabla\cdot \mathbf{u}))dx\notag\\
&\qquad-\int\partial_j(\varrho \mathbf{u}^i)\partial_i\mathbf{u}^j(\vert(\nabla\cdot \mathbf{u})\vert(\nabla\cdot \mathbf{u}))dx
-\int\Big[(\rot \mathbb{H})\times \mathbb{H}\Big]\cdot\nabla(\vert(\nabla\cdot \mathbf{u})\vert(\nabla\cdot \mathbf{u}))dx\notag\\
&\qquad+\int\nabla\cdot(\mathbb{H}\cdot\nabla \mathbf{u})(\vert(\nabla\cdot\mathbb{H})\vert(\nabla\cdot\mathbb{H}))dx
-\int\nabla\cdot(\mathbb{H}(\nabla\cdot \mathbf{u}))(\vert(\nabla\cdot\mathbb{H})\vert(\nabla\cdot\mathbb{H}))dx\notag\\
&\qquad-\int\nabla\cdot(\mathbf{u}\cdot\nabla \mathbb{H})(\vert(\nabla\cdot\mathbb{H})\vert(\nabla\cdot\mathbb{H}))dx\notag\\
&\quad\stackrel{\triangle}{=}\sum_{i=1}^{7}\mathcal{D}_i.\label{(2.53**)}
\end{align}
Because of $(\ref{MHD})_2$, yields that
\begin{align}
\mathcal{D}_1&=-\int\varrho^{-1}\Big(\iota\Delta \mathbf{u}+(\iota+\aleph)\nabla(\nabla\cdot \mathbf{u})
-\nabla \mathfrak{P}(\varrho,\Phi)-\varrho \mathbf{u}\cdot\nabla \mathbf{u}\Big)\cdot\nabla\varrho(\vert(\nabla\cdot \mathbf{u})\vert(\nabla\cdot \mathbf{u}))dx\notag\\
&\quad-\int\varrho^{-1}\Big[(\rot \mathbb{H})\times \mathbb{H}\Big]\cdot\nabla\varrho(\vert(\nabla\cdot \mathbf{u})\vert(\nabla\cdot \mathbf{u}))dx\notag\\
&\stackrel{\triangle}{=}\sum_{j=1}^{2}\mathcal{D}_{1,j},\label{(2.55**)}
\end{align}
we have the estimates of $\mathcal{D}_{1,1}$ and $\mathcal{D}_{2}-\mathcal{D}_{3}$ from (2.59)-(2.60) in \cite[Lemma 2.7]{HJR}, then we analysis
$\mathcal{D}_{1,2}$:
\begin{align}
\mathcal{D}_{1,2}
&\le I\Vert\nabla\varrho\Vert_{L^3}\Vert \mathbb{H}\Vert_{L^6}\Vert\nabla \mathbb{H}\Vert_{L^6}\Vert(\nabla\cdot \mathbf{u})\Vert_{L^6}^2\notag\\
&\le I\Vert\nabla \mathbb{H}\Vert_{H^1}\Vert(\nabla\cdot \mathbf{u})\Vert_{L^3}^\frac{1}{2}
\Vert\vert(\nabla\cdot \mathbf{u})\vert^{\frac{1}{2}}(\nabla\cdot \mathbf{u})\Vert_{L^2}\label{(2.55**)H}\\
&\le \frac{2\iota+\aleph}{8}\Vert\vert(\nabla\cdot \mathbf{u})\vert^\frac{1}{2}(\nabla\cdot \mathbf{u})\Vert_{L^2}^2
+I\Vert\nabla \mathbb{H}\Vert_{H^1}^2\Big(1+\Vert(\nabla\cdot \mathbf{u})\Vert_{L^3}^2\Big),\notag
\end{align}
from (\ref{GN11}), (\ref{(2.5**)}), (\ref{(2.36**)}) and (\ref{(2.58**)}). Sequently, due to (\ref{GN11}), (\ref{(2.5**)}) and integration by parts, we have
\begin{align}
\sum_{i=4}^{7}\mathcal{D}_{i}
&\le \frac{2\iota+\aleph}{8}\Vert\vert(\nabla\cdot \mathbf{u})\vert^\frac{1}{2}(\nabla\cdot \mathbf{u})\Vert_{L^2}^2
+\frac{\nu}{8}\Vert\vert(\nabla\cdot\mathbb{H})\vert^\frac{1}{2}(\nabla\cdot\mathbb{H})\Vert_{L^2}^2\notag\\
&\quad+I\Vert\nabla^2\Phi\Vert_{L^2}^2+I\Big(\Vert\nabla \mathbf{u}\Vert_{H^1}^2+\Vert\nabla \mathbf{u}\Vert_{L^{\frac{9m}{4m-9}}}^3\Big)\label{J4,5,6,7' estimates}\\
&\quad+I\Big(1+\Vert\nabla\Phi\Vert_{L^2}^2+\Vert\nabla \mathbf{u}\Vert_{H^1}^2+\Vert\nabla \mathbb{H}\Vert_{H^1}^2\Big)
\Big(1+\Vert(\nabla\cdot \mathbf{u})\Vert_{L^3}^2+\Vert(\nabla\cdot\mathbb{H})\Vert_{L^3}^2\Big).\notag
\end{align}
After substituting $\mathcal{D}_1-\mathcal{D}_7$ into (\ref{(2.53**)}), we testify that
\begin{align}
&\frac{d}{ds}\int\Big(\varrho\vert(\nabla\cdot \mathbf{u})\vert^3+\vert(\nabla\cdot\mathbb{H})\vert^3\Big)dx
+\Vert\vert(\nabla\cdot \mathbf{u})\vert^{\frac{1}{2}}\nabla(\nabla\cdot \mathbf{u})\Vert_{L^2}^2
+\Vert\vert(\nabla\cdot\mathbb{H})\vert^{\frac{1}{2}}\nabla(\nabla\cdot\mathbb{H})\Vert_{L^2}^2\notag\\
&\quad\le I\Vert\nabla^2\Phi\Vert_{L^2}^2+I\Big(\Vert\nabla \mathbf{u}\Vert_{H^1}^2+\Vert\nabla \mathbf{u}\Vert_{L^{\frac{9m}{4m-9}}}^3\Big)\notag\\
&\qquad+I\Big(1+\Vert\nabla\Phi\Vert_{L^2}^2+\Vert\nabla \mathbf{u}\Vert_{H^1}^2+\Vert\nabla \mathbb{H}\Vert_{H^1}^2\Big)
\Big(1+\Vert(\nabla\cdot \mathbf{u})\Vert_{L^3}^2+(\nabla\cdot\mathbb{H})\Vert_{L^3}^2\Big).\label{(2.62**)}
\end{align}

Thirdly, multiplying $\rot(\ref{MHD})_2$ by $\vert\rot \mathbf{u}\vert\rot \mathbf{u}$ and $\rot(\ref{MHD})_4$ by $\vert\rot \mathbb{H}\vert\rot \mathbb{H}$, adding up and integrating the result over $\mathrm{R}^3$, one reaches
\begin{align}
&\frac{1}{3}\frac{d}{ds}\int\Big(\varrho\vert\rot \mathbf{u}\vert^3+\vert\rot  \mathbb{H}\vert^3\Big)dx+\iota\int\Big(\vert\rot \mathbf{u}\vert\big(\vert\nabla(\vert\rot \mathbf{u}\vert)\vert^2
+\vert\nabla(\rot  \mathbf{u})\vert^2\big)\Big)dx\notag\\
&\qquad+\nu\int\Big(\vert\rot \mathbb{H}\vert\big(\vert\nabla(\vert\rot \mathbb{H}\vert)\vert^2
+\vert\nabla(\rot \mathbb{H})\vert^2\big)\Big)dx\notag\\
&\quad=-\int(\nabla\varrho\times \mathbf{u}_s)\cdot(\vert\rot \mathbf{u}\vert\rot \mathbf{u})dx
-\int\nabla(\varrho \mathbf{u}^i)\times(\partial_i \mathbf{u})\cdot(\vert\rot \mathbf{u}\vert\rot \mathbf{u})dx\notag\\
&\qquad+\int\rot(\mathbb{H}\cdot\nabla \mathbb{H})\cdot(\vert\rot \mathbf{u}\vert\rot \mathbf{u})dx
+\int\rot(\mathbb{H}\cdot\nabla \mathbf{u})\cdot(\vert\rot \mathbb{H}\vert\rot \mathbb{H})dx\notag\\
&\qquad-\int\rot(\mathbb{H}(\nabla\cdot \mathbf{u}))\cdot(\vert\rot \mathbb{H}\vert\rot \mathbb{H})dx
-\int\rot(\mathbf{u}\cdot\nabla \mathbb{H})\cdot(\vert\rot \mathbb{H}\vert\rot \mathbb{H})dx\notag\\
&\quad\stackrel{\triangle}{=}\sum_{i=1}^{6}\mathcal{E}_i.\label{(2.63**)}
\end{align}
For $\mathcal{E}_1$, in terms of (2.64) in \cite[Lemma 2.7]{HJR}, $(\ref{MHD})_2$ and (\ref{(1.27**)}), we have
\begin{align}
\mathcal{E}_1&=(2\iota+\aleph)\int(\nabla\cdot \mathbf{u})(\nabla\log\varrho)\cdot
(\rot(\vert\rot \mathbf{u}\vert\rot \mathbf{u}))dx\notag\\
&\quad+\int(\nabla\log\varrho)\times\Big[\iota\rot(\rot \mathbf{u})+\nabla \mathfrak{P}(\varrho,\Phi)+\varrho \mathbf{u}\cdot\nabla \mathbf{u}\Big]
\cdot(\vert\rot \mathbf{u}\vert\rot \mathbf{u})dx\notag\\
&\quad+\int(\nabla\log\varrho)\times\Big[(\rot \mathbb{H})\times \mathbb{H}\Big]\cdot
(\vert\rot \mathbf{u}\vert\rot \mathbf{u})dx\notag\\
&\stackrel{\triangle}{=}\sum_{j=1}^{3}\mathcal{E}_{1,j}.\label{(2.64**)}
\end{align}
due to (\ref{GN11}), (\ref{(2.5**)}), (\ref{(2.36**)}) and (\ref{(2.58**)}), we obtain
\begin{align}
\mathcal{E}_{1,3}
&\le I\Vert\nabla\varrho\Vert_{L^3}\Vert \mathbb{H}\Vert_{L^6}\Vert\nabla \mathbb{H}\Vert_{L^6}\Vert\rot \mathbf{u}\Vert_{L^6}^2\notag\\
&\le \frac{\iota}{8}\Vert\vert\rot \mathbf{u}\vert^{\frac{1}{2}}\nabla(\rot \mathbf{u})\Vert_{L^2}^2
+I\Vert\nabla \mathbb{H}\Vert_{H^1}^2(1+\Vert\rot \mathbf{u}\Vert_{L^3}^2),\label{(2.64**)H}
\end{align}
Analogous to (\ref{J4,5,6,7' estimates}), using (\ref{GN11}) (\ref{(2.5**)}) and (\ref{aaaa}), we reach
\begin{align}
\sum_{i=3}^{6}\mathcal{E}_{i}
&\le\frac{\iota}{8}\Vert\vert\rot \mathbf{u}\vert^{\frac{1}{2}}\nabla(\rot \mathbf{u})\Vert_{L^2}^2+\frac{\nu}{8}\Vert\vert\rot \mathbb{H}\vert^{\frac{1}{2}}\nabla(\rot \mathbb{H})\Vert_{L^2}^2\notag\\
&\quad+I\Big(1+\Vert\nabla \mathbf{u}\Vert_{H^1}^2+\Vert\nabla \mathbb{H}\Vert_{H^1}^2\Big)
\Big(1+\Vert\rot \mathbf{u}\Vert_{L^3}^2+\Vert\rot \mathbb{H}\Vert_{L^3}^2\Big).\label{K4,5,6' estimates}
\end{align}
Substituting (\ref{(2.64**)H})-(\ref{K4,5,6' estimates}) into (\ref{(2.63**)}) with
(2.66) in \cite[Lemma 2.7]{HJR}, we reach
\begin{align}\label{(2.66**)}
&\frac{d}{ds}\int\Big(\varrho\vert\rot \mathbf{u}\vert^3+\vert\rot  \mathbb{H}\vert^3\Big)dx
+\Vert\vert\rot \mathbf{u}\vert^{\frac{1}{2}}\nabla(\rot \mathbf{u})\Vert_{L^2}^2
+\Vert\vert\rot \mathbb{H}\vert^{\frac{1}{2}}\nabla(\rot \mathbb{H})\Vert_{L^2}^2\notag\\
&\quad\le I\Vert\nabla^2\Phi\Vert_{L^2}^2+I\Big(\Vert\nabla \mathbf{u}\Vert_{H^1}^2+\Vert\nabla \mathbf{u}\Vert_{L^{\frac{9m}{4m-9}}}^3\Big)\notag\\
&\qquad+I\Big(1+\Vert\nabla\Phi\Vert_{L^2}^2+\Vert\nabla \mathbf{u}\Vert_{H^1}^2+\Vert\nabla \mathbb{H}\Vert_{H^1}^2\Big)
\Big(1+\Vert\rot \mathbf{u}\Vert_{L^3}^2+\Vert\rot \mathbb{H}\Vert_{L^3}^2\Big).
\end{align}

Next step, multiplying $(\ref{MHD})_3$ by $\dot{\Phi}$ yields that
\begin{align}
&\frac{\kappa}{2}\frac{d}{ds}\Big(\Vert\nabla\Phi\Vert_{L^2}^2\Big)
+\mathbb{C}_v\int\varrho\vert\dot{\Phi}\vert^2dx\notag\\
&\quad=\aleph\int(\nabla\cdot \mathbf{u})^2\dot{\Phi}dx
-\kappa\int\nabla\Phi \cdot\nabla(\mathbf{u}\cdot\nabla\Phi)dx
+2\iota\int\vert\mathfrak{D}(\mathbf{u})\vert^2\dot{\Phi}dx\notag\\
&\qquad-\mathcal{R}\int \varrho\Phi(\nabla\cdot \mathbf{u})\dot{\Phi}dx+\nu\int\vert\rot \mathbb{H}\vert^2\dot{\Phi}dx\notag\\
&\quad\stackrel{\triangle}{=}\sum_{i=1}^{5}\mathcal{E}_i.\label{(2.67**)}
\end{align}
Because of (\ref{(2.4**)}), (\ref{(2.5**)}), (\ref{GN11}) and $(\ref{MHD})_3$, we can deduce that
\begin{align}
\Vert\nabla^2\Phi\Vert_{L^2}
\le I\Big(1+\Vert\dot{\Phi}\Vert_{L^2}
+(\Vert\nabla\Phi\Vert_{L^2}+\Vert\nabla \mathbf{u}\Vert_{L^6})\Vert\nabla \mathbf{u}\Vert_{L^3}
+\Vert\nabla \mathbb{H}\Vert_{L^3}\Vert\nabla \mathbb{H}\Vert_{L^6}\Big).\label{(2.68**)}
\end{align}
According to (2.69) and (2.70) in \cite[Lemma 2.7]{HJR}, using (\ref{GN11}), (\ref{(2.5**)}) and (\ref{(2.68**)}) obtains
\begin{align*}
\sum_{i=1}^{5}\mathcal{E}_i
&\le I\Vert\nabla\Phi\Vert_{L^2}\Vert\nabla^2\Phi\Vert_{L^2}\Vert\nabla \mathbf{u}\Vert_{L^3}\\
&\quad+I\Vert\dot{\Phi}\Vert_{L^2}\Big(\Vert\nabla \mathbf{u}\Vert_{L^4}^2+\Vert\Phi-1\Vert_{L^6}\Vert\nabla \mathbf{u}\Vert_{L^3}
+\Vert\nabla \mathbf{u}\Vert_{L^2}+\Vert\nabla \mathbb{H}\Vert_{L^4}^2\Big)\\
&\le \alpha_4\Vert\dot{\Phi}\Vert_{L^2}^2\\
&\quad+I(\alpha_4)\Big(1+\Vert\nabla\Phi\Vert_{L^2}^2+\Vert\nabla \mathbf{u}\Vert_{H^1}^2+\Vert\nabla \mathbb{H}\Vert_{H^1}^2\Big)
\Big(1+\Vert\nabla \mathbf{u}\Vert_{L^3}^2+\Vert\nabla \mathbb{H}\Vert_{L^3}^2\Big),
\end{align*}
which is combined with (\ref{(2.45**)}), (\ref{(2.67**)}) and (\ref{(2.68**)}), further we can prove that
\begin{align}
&\frac{d}{ds}\Vert\nabla\Phi\Vert_{L^2}^2+\Vert\dot{\Phi}\Vert_{L^2}^2+\Vert\nabla^2\Phi\Vert_{L^2}^2\notag\\
&\quad\le I\Big(1+\Vert\nabla\Phi\Vert_{L^2}^2+\Vert\nabla \mathbf{u}\Vert_{H^1}^2+\Vert\nabla \mathbb{H}\Vert_{H^1}^2\Big)
\Big(1+\Vert\nabla \mathbf{u}\Vert_{L^3}^2+\Vert\nabla \mathbb{H}\Vert_{L^3}^2\Big),\label{(2.71**)}
\end{align}
for suitably small $\alpha_4>0$.

Eventually, adding (\ref{(2.62**)}) and (\ref{(2.66**)}) to (\ref{(2.71**)}),
coupled with (\ref{(2.45**)}) and (\ref{(2.73**)}), we can certify
\begin{align}\label{(2.72**)}
&\frac{d}{ds}\Big(\Vert\nabla\Phi\Vert_{L^2}^2+\Vert(\nabla\cdot \mathbf{u})\Vert_{L^3}^3+\Vert(\nabla\cdot\mathbb{H})\Vert_{L^3}^3
+\Vert\rot \mathbf{u}\Vert_{L^3}^3+\Vert\rot \mathbb{H}\Vert_{L^3}^3\Big)+\Vert\dot{\Phi}\Vert_{L^2}^2
+\Vert\nabla^2\Phi\Vert_{L^2}^2\notag\\
&\qquad+\Big(\Vert\vert(\nabla\cdot \mathbf{u})\vert^{\frac{1}{2}}\nabla(\nabla\cdot \mathbf{u})\Vert_{L^2}^2
+\Vert\vert(\nabla\cdot\mathbb{H})\vert^{\frac{1}{2}}\nabla((\nabla\cdot\mathbb{H}))\Vert_{L^2}^2\Big)\notag\\
&\qquad+\Big(\Vert\vert\rot \mathbf{u}\vert^{\frac{1}{2}}\nabla(\rot \mathbf{u})\Vert_{L^2}^2
+\Vert\vert\rot \mathbb{H}\vert^{\frac{1}{2}}\nabla(\rot \mathbb{H})\Vert_{L^2}^2\Big)\notag\\
&\quad\le I\Big(1+\Vert\nabla \mathbf{u}\Vert_{H^1}^2\Big)\Big(1+\Vert\nabla \mathbf{u}\Vert_{L^3}^2\Big)\notag\\
&\qquad+I\Big(1+\Vert\nabla\Phi\Vert_{L^2}^2+\Vert\nabla \mathbf{u}\Vert_{H^1}^2+\Vert\nabla \mathbb{H}\Vert_{H^1}^2\Big)
\Big(1+\Vert\nabla \mathbf{u}\Vert_{L^3}^2+\Vert\nabla \mathbb{H}\Vert_{L^3}^2\Big).
\end{align}
After integrating the result over $(0,S)$, using (\ref{aaaa}), (\ref{(2.46**)}) and Gronwall' inequality, we obtain a available estimate,
\begin{align}
&\sup_{0\le s\le S}\Big(\Vert\nabla\Phi\Vert_{L^2}^2+\Vert\nabla \mathbf{u}\Vert_{L^3}^3
+\Vert\nabla \mathbb{H}\Vert_{L^3}^3\Big)
+\int_0^S\Big(\Vert\dot{\Phi}\Vert_{L^2}^2+\Vert\nabla^2\Phi\Vert_{L^2}^2\Big)ds\le I(S),
\end{align}
which is combined with the following conclusion
\begin{align}\label{theta_s' estimates}
\Vert \Phi_s\Vert_{L^2}\le I\Big(\Vert\dot{\Phi}\Vert_{L^2}+\Vert \mathbf{u}\Vert_{L^\infty}\Vert\nabla \Phi\Vert_{L^2}\Big)
\le I\Big(\Vert\dot{\Phi}\Vert_{L^2}+\Vert\nabla \mathbf{u}\Vert_{H^1}\Big)\in L^2(0,S),
\end{align}
we can finish the proof of (\ref{(2.50**)}).

\end{proof}

\begin{Lemma}\label{L(2.8**)}
On the premise of Lemma $\ref{Main result}$, the following estimate can be obtained:
\begin{align}
\Phi(x,s)\ge\inf_{x\in\mathrm{R}^3}\Phi_0(x)\exp\Big\{-\frac{\mathcal{R}}{\mathbb{C}_v}\int_0^S\Vert(\nabla\cdot \mathbf{u})\Vert_{L^{\infty}}ds\Big\}>0,\label{(2.74**)}
\end{align}
for $\forall(x,s)\in\mathrm{R}^3\times[0,S]$, and
\begin{align}
&\sup_{0\le s\le S}s\Big(\Vert\dot{\Phi}\Vert_{L^2}^2+\Vert\nabla^2\Phi\Vert_{L^2}^2
+\Vert\Phi_s\Vert_{L^2}^2\Big)
+\int_0^Ss\Big(\Vert\nabla\dot{\Phi}\Vert_{L^2}^2+\Vert\nabla\Phi_s\Vert_{L^2}^2\Big)ds\le I(S).\label{(2.75**)}
\end{align}
\end{Lemma}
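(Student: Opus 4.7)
The plan is to prove (\ref{(2.74**)}) by a minimum-principle argument on $(\ref{MHD})_3$, then to derive (\ref{(2.75**)}) by applying a transport-type operator to $(\ref{MHD})_3$ and testing against $s\dot{\Phi}$, in the spirit of \cite[Lemma 2.8]{HJR} but with extra care for the magnetic Joule-heating term $\nu\vert\rot\mathbb{H}\vert^2$. For (\ref{(2.74**)}), I rewrite $(\ref{MHD})_3$ as
\begin{equation*}
\mathbb{C}_v\varrho\dot{\Phi}-\kappa\Delta\Phi=2\iota\vert\mathfrak{D}(\mathbf{u})\vert^2+\aleph(\nabla\cdot\mathbf{u})^2+\nu\vert\rot\mathbb{H}\vert^2-\mathcal{R}\varrho\Phi(\nabla\cdot\mathbf{u}).
\end{equation*}
The three quadratic sources are pointwise non-negative, and at any spatial minimum of $\Phi$ one has $\Delta\Phi\ge 0$; hence along characteristics of $\mathbf{u}$, the infimum $\underline{\Phi}(s):=\inf_{x\in\mathrm{R}^3}\Phi(x,s)$ obeys, in the minimum-principle sense, $\mathbb{C}_v\tfrac{d}{ds}\underline{\Phi}(s)\ge-\mathcal{R}\Vert\nabla\cdot\mathbf{u}\Vert_{L^\infty}\underline{\Phi}(s)$. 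Gr\"onwall's inequality yields (\ref{(2.74**)}); the exponential factor at time $S$ is finite by (\ref{(2.32**)}) combined with H\"older's inequality in time (the exponent $n>1$ in (\ref{(2.33**)})).

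For (\ref{(2.75**)}), I apply $s\dot{\Phi}[\partial_s+\nabla\cdot(\mathbf{u}\,\cdot)]$ to $(\ref{MHD})_3$ and integrate over $\mathrm{R}^3$. Using $(\ref{MHD})_1$ and integration by parts produces a schematic identity
\begin{equation*}
\tfrac{1}{2}\tfrac{d}{ds}\bigl(s\mathbb{C}_v\Vert\sqrt{\varrho}\dot{\Phi}\Vert_{L^2}^2\bigr)+s\kappa\Vert\nabla\dot{\Phi}\Vert_{L^2}^2=\tfrac{1}{2}\mathbb{C}_v\Vert\sqrt{\varrho}\dot{\Phi}\Vert_{L^2}^2+\sum_i\mathcal{J}_i,
\end{equation*}
where the $\mathcal{J}_i$ collect the commutator of $[\partial_s+\nabla\cdot(\mathbf{u}\,\cdot)]$ with $\kappa\Delta$, the time-transport of the pressure-work $\mathfrak{P}(\nabla\cdot\mathbf{u})$, and those of the three dissipation terms. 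The hydrodynamic pieces are treated exactly as in \cite{HJR}; the new magnetic pieces produce integrals of the form $s\int\Phi(\rot\mathbb{H})\cdot(\rot\dot{\mathbb{H}})\,dx$, $s\int\Phi(\rot\mathbb{H})\cdot(\nabla\mathbf{u}^i\times\partial_i\mathbb{H})\,dx$, and $s\int\Phi\vert\rot\mathbb{H}\vert^2(\nabla\cdot\mathbf{u})\,dx$. By H\"older, (\ref{Special GN11}), the $L^\infty$-control of $\Phi$ from (\ref{(2.32**)}), and the weighted estimates (\ref{(2.5**)})--(\ref{bbb}), (\ref{(2.10**)}), (\ref{(2.36**)}), (\ref{(2.50**)}) already in hand, each such term is absorbed into $\frac{\kappa}{4}s\Vert\nabla\dot{\Phi}\Vert_{L^2}^2$ on the left, plus a small fraction of $s\Vert\nabla\dot{\mathbb{H}}\Vert_{L^2}^2$ (which is in $L^1(0,S)$ by (\ref{(2.10**)})), plus $L^1(0,S)$-integrable remainders. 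Gr\"onwall's inequality then delivers the bounds on $\sup_{s\in[0,S]}s\Vert\dot{\Phi}\Vert_{L^2}^2$ and $\int_0^S s\Vert\nabla\dot{\Phi}\Vert_{L^2}^2\,ds$.

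The remaining estimates in (\ref{(2.75**)}) follow by post-processing. The elliptic inequality (\ref{(2.68**)}), combined with the just-derived bound on $\sqrt{s}\dot{\Phi}$ and the uniform $L^3$-bounds on $\nabla\mathbf{u},\nabla\mathbb{H}$ from (\ref{(2.36**)})-(\ref{(2.50**)}), yields $\sup_{s\in[0,S]}s\Vert\nabla^2\Phi\Vert_{L^2}^2\le I(S)$; the identity $\Phi_s=\dot{\Phi}-\mathbf{u}\cdot\nabla\Phi$ together with Sobolev embedding gives the bound on $s\Vert\Phi_s\Vert_{L^2}^2$; and $\nabla\Phi_s=\nabla\dot{\Phi}-\nabla(\mathbf{u}\cdot\nabla\Phi)$ controls $\int_0^S s\Vert\nabla\Phi_s\Vert_{L^2}^2\,ds$. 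The principal technical obstacle is the bookkeeping for the cross-products that appear when the transport operator hits the three quadratic dissipation terms, in particular the magnetic one absent from the pure Navier-Stokes setting of \cite{HJR}: each such cross-product must be matched against the correct previously established weighted quantity from Lemmas \ref{L (2.1**)}-\ref{Lbbb}, for otherwise one would need bounds not yet available at this stage of the a priori argument.
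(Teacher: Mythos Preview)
Your approach matches the paper's: minimum principle for (\ref{(2.74**)}), then apply $[\partial_s+\nabla\cdot(\mathbf{u}\,\cdot)]$ to $(\ref{MHD})_3$, test against $\dot\Phi$, refer to \cite{HJR} for the hydrodynamic commutators, and post-process via (\ref{(2.68**)}) and $\Phi_s=\dot\Phi-\mathbf{u}\cdot\nabla\Phi$ for the remaining terms.

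One slip to fix: after testing against $\dot\Phi$ the magnetic cross-terms carry a factor $\dot\Phi$, not $\Phi$; they are
\[
\int 2\nu(\rot\mathbb{H})\bigl(\rot\dot{\mathbb{H}}-\nabla\mathbf{u}^i\times\partial_i\mathbb{H}\bigr)\dot\Phi\,dx
+\int\nu\vert\rot\mathbb{H}\vert^2(\nabla\cdot\mathbf{u})\dot\Phi\,dx,
\]
and the way to close them is $\Vert\dot\Phi\Vert_{L^6}\le I\Vert\nabla\dot\Phi\Vert_{L^2}$ together with $\Vert\nabla\mathbb{H}\Vert_{L^3}$ from (\ref{(2.50**)}), yielding remainders $I(\Vert\nabla\dot{\mathbb{H}}\Vert_{L^2}^2+\Vert\nabla^2\mathbb{H}\Vert_{L^2}^4)$; the quartic piece is handled as $(s\Vert\nabla^2\mathbb{H}\Vert_{L^2}^2)\cdot\Vert\nabla^2\mathbb{H}\Vert_{L^2}^2$ via (\ref{(2.46**)}). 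Note that (\ref{(2.32**)}) only gives $\Phi-1\in L^n(0,S;L^\infty)$, not uniform $L^\infty$ control, so you should not invoke it here.
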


\begin{proof}
Due to $(\ref{MHD})_3$, we have
\begin{align}
\dot{\Phi}-\frac{\kappa}{\mathbb{C}_v}\varrho^{-1}\Delta\Phi+\frac{\mathcal{R}}{\mathbb{C}_v}(\nabla\cdot \mathbf{u})\Phi=\frac{2\iota\vert\mathfrak{D}(\mathbf{u})\vert^2+\aleph(\nabla\cdot \mathbf{u})^2+\nu\vert\rot \mathbb{H}\vert^2}{\mathbb{C}_v\varrho}\ge0,\label{theta(x,s)' estiamtes}
\end{align}
coupled with (\ref{(2.32**)}), we can prove (\ref{(2.74**)}) immediately.

Then, operating $\partial_s+\nabla\cdot(\mathbf{u}\cdot)$ to $(\ref{MHD})_3$, multiplying it by $\dot{\Phi}$ and integrating the result with (\ref{(2.45**)}), it yields
\begin{align}
&\frac{\mathbb{C}_v}{2}\Vert\dot{\Phi}\Vert_{L^2}^2+\kappa\Vert\nabla\dot{\Phi}\Vert_{L^2}^2\notag\\
&\quad=\int\kappa\Big((\nabla\cdot \mathbf{u})\Delta\Phi
-\partial_i(\partial_i\mathbf{u}\cdot\nabla\Phi)-\partial_i\mathbf{u}\cdot\nabla\partial_i\Phi\Big)\dot{\Phi}dx\notag\\
&\qquad+\int \mathcal{R}\varrho\Big(\Phi\partial_k\mathbf{u}^l\partial_l\mathbf{u}^k-\dot{\Phi}(\nabla\cdot \mathbf{u})
-\Phi\nabla\cdot \dot{\mathbf{u}}\Big)\dot{\Phi}dx\notag\\
&\qquad+\int\iota\Big(\partial_j\mathbf{u}^i+\partial_i\mathbf{u}^j\Big)\Big(\partial_i\dot{\mathbf{u}}^j
+\partial_j\dot{\mathbf{u}}^i
-\partial_i\mathbf{u}^k\partial_k\mathbf{u}^j-\partial_j\mathbf{u}^k\partial_k\mathbf{u}^i\Big)\dot{\Phi}dx\notag\\
&\qquad+\int2\aleph\Big(\nabla\cdot\dot{\mathbf{u}}-\partial_k\mathbf{u}^l\partial_l\mathbf{u}^k\Big)(\nabla\cdot \mathbf{u})\dot{\Phi}dx
+\int\Big(2\iota\vert\mathfrak{D}(\mathbf{u})\vert^2
+\aleph(\nabla\cdot \mathbf{u})^2\Big)(\nabla\cdot \mathbf{u})\dot{\Phi}dx\notag\\
&\qquad+\int2\nu(\rot \mathbb{H})\Big(\rot \dot{\mathbb{H}}-\nabla(\mathbf{u}\cdot\nabla)\times \mathbb{H}\Big)\dot{\Phi}dx
+\int\nu\Big(\vert\rot \mathbb{H}\vert^2(\nabla\cdot\mathbf{u})\Big)\dot{\Phi}dx\notag\\
&\quad\stackrel{\triangle}{=}\sum_{i=1}^{7}\mathcal{F}_i.\label{(3.95*)}
\end{align}
For $\mathcal{F}_{1}-\mathcal{F}_{5}$, we can see \cite[Lemma 2.8]{HJR}. So, we need analysis the estimates
of $\mathcal{F}_{6}$ and $\mathcal{F}_{7}$, using (\ref{GN11}), (\ref{(2.5**)}) and (\ref{(2.50**)}) obtains that
\begin{align}
(\mathcal{F}_{6}+\mathcal{F}_{7})
&\le I\Big(\Vert\nabla \mathbb{H}\Vert_{L^3}\Vert\nabla\dot{\mathbb{H}}\Vert_{L^2}\Vert\dot{\Phi}\Vert_{L^6}
+\Vert\nabla \mathbf{u}\Vert_{L^2}\Vert\nabla \mathbb{H}\Vert_{L^6}^2\Vert\dot{\Phi}\Vert_{L^6}\Big)\notag\\
&\le \frac{\kappa}{8}\Vert\nabla\dot{\Phi}\Vert_{L^2}^2
+I\Big(\Vert\nabla\dot{\mathbb{H}}\Vert_{L^2}^2+\Vert\nabla^2 \mathbb{H}\Vert_{L^2}^4\Big).\label{F 6,7' estiamtes}
\end{align}

Based on (\ref{(2.45**)}), substituting $\mathcal{F}_{1}-\mathcal{F}_{7}$ into (\ref{(3.95*)}), it gets
\begin{align}
\frac{d}{ds}\Vert\dot{\Phi}\Vert_{L^2}^2+\Vert\nabla\dot{\Phi}\Vert_{L^2}^2
&\le I\Vert\dot{\Phi}\Vert_{L^2}^2\Big(1+\Vert\nabla^2\Phi\Vert_{L^2}^2\Big)
+I\Big(1+\Vert\nabla^2\Phi\Vert_{L^2}^2+\Vert\nabla^2 \mathbf{u}\Vert_{L^2}^2\Big)\notag\\
&\quad+I\Big(\Vert\nabla\dot{\mathbf{u}}\Vert_{L^2}^2+\Vert\nabla\dot{\mathbb{H}}\Vert_{L^2}^2
+\Vert\nabla^2 \mathbf{u}\Vert_{L^2}^4+\Vert\nabla^2 \mathbb{H}\Vert_{L^2}^4\Big),\label{L 2.8**}
\end{align}
which is combined with (\ref{(2.46**)}), (\ref{(2.50**)}) and Gronwall' inequality, we have
\begin{align}
\sup_{0\le s\le S}\Big(s\Vert\dot{\Phi}\Vert_{L^2}^2\Big)+\int_0^Ss\Vert\nabla\dot{\Phi}\Vert_{L^2}^2ds
&\le I+I\sup_{0\le s\le S}\Big(s\Vert\nabla^2 \mathbf{u}\Vert_{L^2}^2\Big)\int_0^S\Vert\nabla^2 \mathbf{u}\Vert_{L^2}^2ds\notag\\
&\quad+I\sup_{0\le s\le S}\Big(s\Vert\nabla^2 \mathbb{H}\Vert_{L^2}^2\Big)\int_0^S\Vert\nabla^2 \mathbb{H}\Vert_{L^2}^2ds\notag\\
&\le I.\label{(2.77**)}
\end{align}

Moveover, utilizing (\ref{(2.46**)}), (\ref{(2.50**)}) and (\ref{(2.68**)}), we can prove
$s\Vert\nabla^2\Phi\Vert_{L^2}^2\in L^{\infty}(0,S)$.
From (\ref{theta_s' estimates}), we also can get
$\sqrt{s}\Phi\in L^{\infty}(0,S; L^2)\cap L^{\infty}(0,S; H^1).$
Subsequently, we can finish the proof of (\ref{(2.75**)}).

\end{proof}

\section{\large\bf The proof of Theorem \ref{Main result}}

In the eventual segment, we provide the proof of Theorem \ref{Main result}.
Similar to the way of \cite{{ATT},{HCG},{XJG}}, we can obtain the global existence.
The thought of proofing uniqueness problem is derived from \cite{{PW},{JJJO},{MR},{HJR}}. The detailed process is as follows:

\noindent\textbf{Proof of uniqueness.}
Assume that (\ref{MHD}) has two solutions $(\varrho_1,\mathbf{u}_1,\Phi_1,\mathbb{H}_1)$ and $(\varrho_2,\mathbf{u}_2,\Phi_2,\mathbb{H}_2)$,  which satisfies (\ref{(1.19**)}) and the same initial data on $\mathrm{R}^3\times[0,S]$. Define
\begin{align*}
\widehat{\varrho}\stackrel{\triangle}{=}\varrho_1-\varrho_2,\quad
\widehat{\mathbf{u}}\stackrel{\triangle}{=}\mathbf{u}_1-\mathbf{u}_2,\quad
\widehat{\Phi}\stackrel{\triangle}{=}\Phi_1-\Phi_2,\quad
\widehat{\mathbb{H}}\stackrel{\triangle}{=}\mathbb{H}_1-\mathbb{H}_2.
\end{align*}

Due to $(\ref{MHD})_1$, we reach that
\begin{align}
\widehat{\varrho}_s+\mathbf{u}_2\cdot\nabla \widehat{\varrho}+\widehat{\varrho}(\nabla\cdot\mathbf{u})_2+\varrho_1(\nabla\cdot\mathbf{u})
+\widehat{\mathbf{u}}\cdot\nabla\varrho_1=0,
\end{align}
after multiplying the result by $\widehat{\varrho}$ in $L^2$ and integrating by parts, one has
\begin{align}\label{(3.1**)}
\frac{d}{ds}\Vert\widehat{\varrho}\Vert_{L^2}^2
&\le I\Vert\nabla\cdot{\mathbf{u}_2}\Vert_{L^{\infty}}\Vert\widehat{\varrho}\Vert_{L^2}
+I\Vert\nabla\widehat{\mathbf{u}}\Vert_{L^2}\Vert\widehat{\varrho}\Vert_{L^2}.
\end{align}
Using (\ref{(1.19**)}), $\Vert\nabla\cdot{\mathbf{u}_2}\Vert_{L^{\infty}}\in L^1(0,S)$. Because of (\ref{(3.1**)}) and Gronwall's inequality,
\begin{align}\label{(3.2**)}
\Vert\widehat{\varrho}(s)\Vert_{L^2}\le Is^{\frac{1}{2}}\Big(\int_0^s\Vert\nabla\widehat{\mathbf{u}}\Vert_{L^2}^2dh\Big)^{\frac{1}{2}}, \quad \forall s\in[0,S].
\end{align}

Next, in terms of $(\ref{MHD})_2$ and $(\ref{MHD})_4$, we know ($\dot{\mathbf{u}}_2\stackrel{\triangle}{=}{\mathbf{u}}_{2s}+{\mathbf{u}}_2\cdot\nabla{\mathbf{u}}_2$)
\begin{align}\label{(3.3**)}
&\varrho_1\widehat{\mathbf{u}}_s+\varrho_1\mathbf{u}_1\cdot\nabla\widehat{\mathbf{u}}
-\iota\Delta\widehat{\mathbf{u}}-(\iota+\aleph)\nabla(\nabla\cdot\widehat{\mathbf{u}})\notag\\
&\quad=-\widehat{\varrho}\dot{\mathbf{u}}_2-\varrho_1\widehat{\mathbf{u}}\cdot\nabla\mathbf{u}_2-
\nabla\Big(\mathfrak{P}(\varrho_1,\Phi_1)-\mathfrak{P}(\varrho_2,\Phi_2)\Big)\\
&\qquad+\mathbb{H}_1\cdot\nabla\widehat{\mathbb{H}}+\widehat{\mathbb{H}}\cdot\nabla\mathbb{H}_2
-\widehat{\mathbb{H}}(\nabla\cdot\mathbf{u}_1)-\mathbb{H}\nabla\cdot\widehat{\mathbf{u}},\notag
\end{align}
and
\begin{align}\label{(3.3**)H}
\widehat{\mathbb{H}}_s-\nu\Delta\widehat{\mathbb{H}}
=\widehat{\mathbb{H}}\cdot\nabla\mathbf{u}_1
+\mathbb{H}_2\cdot\nabla\widehat{\mathbf{u}}-\mathbf{u}_1\cdot\nabla\widehat{\mathbb{H}}
-\widehat{\mathbf{u}}\cdot\nabla\mathbb{H}_2-\widehat{\mathbb{H}}(\nabla\cdot{\mathbf{u}}_1)
-\mathbb{H}_2(\nabla\cdot{\widehat{\mathbf{u}}}).
\end{align}
Multiplying (\ref{(3.3**)}) by ${\widehat{\mathbf{u}}}$, (\ref{(3.3**)H}) by $\widehat{\mathbb{H}}$ in $L^2$ and integrating by parts, after adding up the results, we get
\begin{align}\label{(3.3)H}
&\frac{1}{2}\frac{d}{ds}(\Vert\sqrt{\varrho_1}\widehat{\mathbf{u}}\Vert_{L^2}^2
+\Vert\widehat{\mathbb{H}}\Vert_{L^2}^2)+\iota\Vert\nabla\widehat{\mathbf{u}}\Vert_{L^2}^2
+(\iota+\aleph)\Vert\nabla\cdot\widehat{\mathbf{u}}\Vert_{L^2}^2
+\nu\Vert\nabla\widehat{\mathbb{H}}\Vert_{L^2}^2\notag\\
&\quad\le I\Vert\widehat{\varrho}\Vert_{L^2}\Vert\dot{\mathbf{u}}_2\Vert_{L^3}\Vert\widehat{\mathbf{u}}\Vert_{L^6}
+I\Vert\widehat{\mathbf{u}}\Vert_{L^2}\Vert\nabla\mathbf{u}_2\Vert_{L^3}
\Vert\widehat{\mathbf{u}}\Vert_{L^6}\notag\\
&\qquad+I(\Vert\Phi_2\Vert_{L^{\infty}}\Vert\widehat{\varrho}\Vert_{L^2}
+\Vert\widehat{\Phi}\Vert_{L^2})\Vert\nabla\widehat{\mathbf{u}}\Vert_{L^2}
+I\Big(\Vert\mathbb{H}_1\Vert_{L^3}+\Vert\mathbb{H}_1\Vert_{L^3}\Big)
\Vert\widehat{\mathbb{H}}\Vert_{L^6}\Vert\nabla\widehat{\mathbf{u}}\Vert_{L^2}\notag\\
&\qquad+I\Big(\Vert\widehat{\mathbb{H}}\Vert_{L^2}\Vert\nabla\mathbf{u}_1\Vert_{L^3}
+\Vert\mathbb{H}_2\Vert_{L^3}\Vert\nabla\widehat{\mathbf{u}}\Vert_{L^2}
+\Vert\mathbf{u}_1\Vert_{L^3}\Vert\nabla\widehat{\mathbb{H}}\Vert_{L^2}
+\Vert\widehat{\mathbf{u}}\Vert_{L^3}\Vert\nabla\widehat{\mathbb{H}}\Vert_{L^2}
\Big)\Vert\widehat{\mathbb{H}}\Vert_{L^6}\notag\\
&\quad\le\frac{\iota}{2}\Vert\nabla\widehat{\mathbf{u}}\Vert_{L^2}^2
+\frac{\nu}{2}\Vert\nabla\widehat{\mathbb{H}}\Vert_{L^2}^2
+I\Big(1+\Vert\dot{\mathbf{u}}_2\Vert_{L^3}^2+\Vert\Phi_2-1\Vert_{L^{\infty}}\Big)
\Vert\widehat{\Phi}\Vert_{L^2}^2\notag\\
&\qquad+I\Big(\Vert\widehat{\Phi}\Vert_{L^2}^2+\Vert\widehat{\mathbf{u}}\Vert_{L^2}^2
+\Vert\widehat{\mathbb{H}}\Vert_{L^2}^2\Big)
\end{align}
namely, it reaches that
\begin{align}\label{(3.4**)}
&\frac{d}{ds}(\Vert\sqrt{\varrho_1}\widehat{\mathbf{u}}\Vert_{L^2}^2
+\Vert\widehat{\mathbb{H}}\Vert_{L^2}^2)
+\Vert\nabla\widehat{\mathbf{u}}\Vert_{L^2}^2+\Vert\nabla\widehat{\mathbb{H}}\Vert_{L^2}^2\notag\\
&\quad\le I\Big(1+\Vert\dot{\mathbf{u}}_2\Vert_{L^3}^2+\Vert\Phi_2-1\Vert_{L^{\infty}}\Big)
\Vert\widehat{\Phi}\Vert_{L^2}^2+I\Big(\Vert\widehat{\Phi}\Vert_{L^2}^2+\Vert\widehat{\mathbf{u}}\Vert_{L^2}^2
+\Vert\widehat{\mathbb{H}}\Vert_{L^2}^2\Big).
\end{align}

Then, one may check that from $(\ref{MHD})_3$ ($\dot{\Phi}_2\stackrel{\triangle}{=}{\Phi}_{2s}+\mathbf{u}\cdot\nabla\Phi_2$)
\begin{align*}
&\mathbb{C}_v(\varrho_1\widehat{\Phi}_s+\varrho_1\mathbf{u}_1\cdot\nabla\Phi)-\kappa\Delta\Phi\\
&\quad=-\mathbb{C}_v(\widehat{\varrho}\dot{\Phi}_2+\varrho_1\widehat{\mathbf{u}}\cdot\nabla\Phi_2)
-\Big(\mathfrak{P}(\varrho_1,\Phi_1)(\nabla\cdot\mathbf{u}_1)-\mathfrak{P}(\varrho_2,\Phi_2)
(\nabla\cdot\mathbf{u}_2)\Big)\\
&\qquad+2\iota\Big(\vert\mathfrak{D}(\mathbf{u}_1)\vert^2-\vert\mathfrak{D}(\mathbf{u}_2)\vert^2\Big)
+\aleph\Big(\vert(\nabla\cdot\mathbf{u}_1)\vert^2-\vert(\nabla\cdot\mathbf{u}_2)\vert^2\Big)
+\nu\Big(\vert\rot \mathbb{H}_1\vert^2-\vert\rot \mathbb{H}_2\vert^2\Big),
\end{align*}
compared with similar process like (3.5) in \cite{HJR} and (\ref{(3.3)H}), we have
\begin{align}
\int(\vert\nabla\mathbb{H}_1\vert+\vert\nabla\mathbb{H}_2\vert)
\vert\nabla\widehat{\mathbb{H}}\vert\vert\widehat{\Phi}\vert dx
&\le I(\Vert\nabla\mathbb{H}_1\Vert_{L^3}+\Vert\nabla\mathbb{H}_2\Vert_{L^3})
\Vert\nabla\widehat{\mathbb{H}}\Vert_{L^2}\Vert\widehat{\Phi}\Vert_{L^6}\notag\\
&\le I(\Vert\nabla\widehat{\mathbb{H}}\Vert_{L^2}^2+\Vert\nabla\widehat{\Phi}\Vert_{L^2}^2) \notag
\end{align}
and
\begin{align}\label{(3.5**)}
&\frac{d}{ds}\Vert\sqrt{\varrho_1}\widehat{\Phi}\Vert_{L^2}^2+\Vert\nabla\widehat{\Phi}\Vert_{L^2}^2\notag\\
&\quad\le I(\Vert\nabla\widehat{\mathbf{u}}\Vert_{L^2}^2+\Vert\nabla\widehat{\mathbb{H}}\Vert_{L^2}^2)
+I\Big(1+\Vert\dot{\Phi}_2\Vert_{L^3}^2+\Vert\Phi_2-1\Vert_{L^{\infty}}\Big)
\Vert\widehat{\Phi}\Vert_{L^2}^2\\
&\qquad+I(1+\Vert\nabla{\Phi}_2\Vert_{L^3}^2)(\Vert\widehat{\Phi}\Vert_{L^2}^2
+\Vert\widehat{\mathbf{u}}\Vert_{L^2}^2). \notag
\end{align}

Due to (\ref{(3.4**)}), (\ref{(3.5**)}) and (\ref{(3.2**)}), we get
\begin{align}\label{(3.6**)}
&\frac{d}{ds}(\Vert\sqrt{\varrho_1}\widehat{\mathbf{u}}\Vert_{L^2}^2
+\Vert\widehat{\mathbb{H}}\Vert_{L^2}^2+\Vert\sqrt{\varrho_1}\widehat{\Phi}\Vert_{L^2}^2)
+\Vert\nabla\widehat{\mathbf{u}}\Vert_{L^2}^2+\Vert\nabla\widehat{\mathbb{H}}\Vert_{L^2}^2
+\Vert\nabla\widehat{\Phi}\Vert_{L^2}^2\notag\\
&\quad\le Is(1+\Vert\dot{\mathbf{u}}_2\Vert_{L^3}^2+\Vert\dot{\Phi}_2\Vert_{L^3}^2+\Vert\Phi_2-1\Vert_{L^{\infty}})
\int_0^s\Vert\nabla\widehat{\mathbf{u}}\Vert_{L^2}^2dh\\
&\qquad+I(1+\Vert\nabla{\Phi}_2\Vert_{L^3}^2)(\Vert\widehat{\Phi}\Vert_{L^2}^2
+\Vert\widehat{\mathbf{u}}\Vert_{L^2}^2). \notag
\end{align}

Denoting
\begin{align*}
\mathrm{X}(s)\stackrel{\triangle}{=}(\Vert\sqrt{\varrho_1}\widehat{\mathbf{u}}\Vert_{L^2}^2
+\Vert\widehat{\mathbb{H}}\Vert_{L^2}^2+\Vert\sqrt{\varrho_1}\widehat{\Phi}\Vert_{L^2}^2)
+\int_0^s(\Vert\nabla\widehat{\mathbf{u}}\Vert_{L^2}^2+\Vert\nabla\widehat{\mathbb{H}}\Vert_{L^2}^2
+\Vert\nabla\widehat{\Phi}\Vert_{L^2}^2)dh.
\end{align*}
By (\ref{(2.45**)}) and (\ref{(3.6**)}), one reaches that
\begin{align}\label{(3.7**)}
\mathrm{X}'(s)\le\mathrm{Y}(s)\mathrm{X}(s) \quad with\quad \mathrm{X}(0)=0,
\end{align}
where
\begin{align*}
\mathrm{Y}(s)&\stackrel{\triangle}{=}
Is(1+\Vert\dot{\mathbf{u}}_2\Vert_{L^3}^2+\Vert\dot{\Phi}_2\Vert_{L^3}^2+\Vert\Phi_2-1\Vert_{L^{\infty}})
+I(1+\Vert\nabla{\Phi}_2\Vert_{L^3}^2)\\
&\le Is(1+\Vert\dot{\mathbf{u}}_2\Vert_{L^2}^2+\Vert\nabla\dot{\mathbf{u}}_2\Vert_{L^2}^2
+\Vert\dot{\Phi}_2\Vert_{L^2}^2+\Vert\nabla\dot{\Phi}_2\Vert_{L^2}^2)+I(1+\Vert\nabla{\Phi}_2\Vert_{H^1}^2).
\end{align*}
Which is connected with (\ref{(1.19**)}), (\ref{(3.7**)}) and Gronwall's inequality $\mathrm{X}(s)=0$, i.e.,
\begin{align*}
\mathrm{X}(s)=0, \quad \forall s\in[0,S].
\end{align*}

Subsequently, we know $(\widehat{\mathbf{u}},\widehat{\mathbb{H}},\widehat{\Phi})(x,s)=0$ a.e. on $\mathrm{R}^3\times[0,S]$. And using (\ref{(3.2**)}) again, we can obtain $\widehat{\varrho}(x,s)=0$ a.e. on $\mathrm{R}^3\times[0,S]$. The whole proof is over.

~\\
{\large\bf  References}

\end{document}